\documentclass[11pt]{amsart}

\usepackage[margin=1.12in]{geometry}
\usepackage{amsmath,amssymb,amsfonts,mathtools}
\usepackage{bm}
\usepackage{microtype}
\usepackage{graphicx,placeins}
\usepackage{xcolor}
\usepackage{aliascnt}
\usepackage[colorlinks=true,linkcolor=blue!55!black,citecolor=blue!55!black,
            urlcolor=blue!55!black]{hyperref}
\usepackage[capitalize,noabbrev]{cleveref}
\hypersetup{
  pdftitle={Concentration of bounded sparse chaoses and sparse Khatri-Rao embeddings},
  pdfauthor={Guozheng Dai, Yiyun He, Ke Wang, Yizhe Zhu},
  pdfsubject={Moment inequalities for bounded sparse polynomial chaoses},
  pdfkeywords={polynomial chaos, Bennett inequality,   sparse random variable, Khatri-Rao embedding}
}

\allowdisplaybreaks[3]
\newtheorem{theorem}{Theorem}[section]
\newaliascnt{proposition}{theorem}
\newtheorem{proposition}[proposition]{Proposition}
\aliascntresetthe{proposition}
\newaliascnt{corollary}{theorem}
\newtheorem{corollary}[corollary]{Corollary}
\aliascntresetthe{corollary}
\newaliascnt{lemma}{theorem}
\newtheorem{lemma}[lemma]{Lemma}
\aliascntresetthe{lemma}
\newcommand{\E}{\mathbb E}
\newcommand{\Pp}{\mathbb P}
\newcommand{\cJ}{\mathcal J}
\newcommand{\norm}[1]{\left\lVert #1\right\rVert}
\newcommand{\abs}[1]{\left\lvert #1\right\rvert}

\newcommand{\ind}{\mathbf 1}
\newcommand{\Ber}{\operatorname{Bernoulli}}

\title[Bounded sparse chaoses and Khatri--Rao embeddings]
{Concentration of bounded sparse chaoses and  sparse Khatri--Rao embeddings}
\author{Guozheng Dai}
\address{Department of Mathematics, Hong Kong University of Science and
Technology, Clear Water Bay, Kowloon, Hong Kong}
\email{guozhengdai@ust.hk}

\author{Yiyun He}
\address{Department of Mathematics, University of California, San Diego, La Jolla, CA 92093 USA}
\email{yih130@ucsd.edu}

\author{Ke Wang}
\address{Department of Mathematics, Hong Kong University of Science and
Technology, Clear Water Bay, Kowloon, Hong Kong}
\email{kewang@ust.hk}

\author{Yizhe Zhu}
\address{Department of Mathematics, University of Southern California,
3620 Vermont Avenue, Los Angeles, CA 90089, USA}
\email{yizhezhu@usc.edu}
\date{}
\subjclass[2020]{Primary 60E15; Secondary 60G50, 65F55}
\keywords{Polynomial chaos, Bennett inequality, sparse random variable,
 Khatri--Rao embedding,
tensor sketching}

\begin{document}
\flushbottom
\emergencystretch=2em
\setlength{\jot}{2pt}
\setlength{\abovedisplayskip}{7pt plus 2pt minus 3pt}
\setlength{\belowdisplayskip}{7pt plus 2pt minus 3pt}
\setlength{\abovedisplayshortskip}{4pt plus 2pt minus 2pt}
\setlength{\belowdisplayshortskip}{4pt plus 2pt minus 2pt}

\begin{abstract}
We establish moment and mixed-tail inequalities for fixed-order decoupled homogeneous chaoses generated by independent, centered, sparse bounded random variables. Our bounds apply to arbitrary real rectangular coefficient tensors and describe the fluctuation scales through weighted slice and partition norms, with a Bennett-type logarithmic improvement in the largest-entry term. As an application, we derive guarantees for sparse Khatri--Rao embeddings that explicitly account for sparsity and input geometry.\end{abstract}

\maketitle

% \tableofcontents

\section{Introduction and main results}

Concentration inequalities quantify how far a random quantity can
deviate from its mean or another typical value.  They provide tools
for controlling estimation error, studying random matrices, and
choosing sample sizes in randomized algorithms
\cite{Ledoux2001,BLMbook,vershynin2018high}.
For nonlinear statistics, the useful bounds must reflect both the
input distributions and the structure of the function being studied.
Polynomials of independent variables are an important example: they
include subgraph counts in random graphs and many estimators built
from random vectors.  Concentration and large deviations for random graph polynomials
have been studied, among others, by Kim and
Vu~\cite{kim2000concentration},
Chatterjee~\cite{chatterjee2012missing,chatterjee2016introduction},
and Lubetzky and Zhao~\cite{LZ17}.

Much of the theory of polynomial concentration proceeds through
\(L_r\)-moment estimates, from which tail bounds follow by Markov's
inequality. For sums of independent variables,
Montgomery-Smith~\cite{MS_PAMS} obtained tail estimates in the
Rademacher case. Gluskin and Kwapie{\'n}~\cite{gluskin1995tail}
and Hitczenko, Montgomery-Smith, and Oleszkiewicz~\cite{HMO97}
obtained sharp estimates for symmetric summands with log-concave
and log-convex tails, respectively. More generally,
Lata{\l}a~\cite{Latala97} established sharp moment estimates for
sums of independent nonnegative or symmetric random variables.
For quadratic forms in independent centered sub-Gaussian variables,
the Hanson--Wright inequality \cite{HW71,RV13} expresses concentration
in terms of the Frobenius and operator norms of the coefficient
matrix. Further results on the concentration of quadratic forms
can be found in \cite{Latala99,VW15,Adamczak15,KZ20,CY21}.

Beyond quadratic forms, Lata{\l}a~\cite{Latala2006} obtained
sharp moment estimates for Gaussian chaoses of arbitrary order,
using tensor partition norms. For symmetric inputs with
log-concave tails, Adamczak and Lata{\l}a~\cite{AL12} obtained
sharp bounds for chaoses of order at most three and for exponential
chaoses of arbitrary order. Kolesko and Lata{\l}a~\cite{KL15}
developed the corresponding moment theory for log-convex tails.
For general polynomials, Adamczak and Wolff~\cite{AW15}
developed bounds in terms of derivative tensors, and G\"otze,
Sambale, and Sinulis~\cite{gotze2021concentration} obtained
multilevel concentration inequalities for independent
\(\alpha\)-sub-exponential inputs.
Related moment inequalities for \(U\)-statistics and general
functions of independent variables can be found in
\cite{GLZ00,BBLM05,schudy2012concentration}.
Alongside these results, concentration inequalities for simple
random tensors were established by
Vershynin~\cite{vershynin2020concentration} and Bamberger,
Krahmer, and Ward~\cite{BKW22}. Adamczak, Lata{\l}a, and
Meller~\cite{ALM21} obtained moment and tail estimates for
Banach-valued Gaussian chaoses.

Sparsity introduces information that bounds based only on uniform
or global Orlicz norms can miss.  For inputs
\(X_i=\delta_i\xi_i\), where \(\delta_i\sim\Ber(p_i)\), the probabilities \(p_i\) should enter the
coefficient scales. 
Dai and Wang~\cite{DaiWang2026} incorporated
these probabilities into partition norms of expected derivative
tensors and obtained moment and tail bounds for general sparse
polynomials with \(\alpha\)-sub-exponential amplitudes,
\(0<\alpha\leq1\).  Their estimates provide the input to the
present work.  Here we use boundedness to obtain an additional
Bennett-type logarithmic correction.

The benefit of adapting to sparsity is already visible for a sum.
Let \(\delta_1,\ldots,\delta_n\) be independent \(\Ber(p)\)
variables, with \(0<p<1\), and put
\(S=\sum_i(\delta_i-p)\) and \(v=np(1-p)\).
Hoeffding's inequality~\cite{Hoeffding1963} and Bennett's
inequality~\cite{bennett1962probability} give, respectively,
\[
 \Pp(S\geq t)\leq\exp(-2t^2/n),\qquad
 \Pp(S\geq t)\leq\exp(-v h(t/v)),\qquad t>0,
\]
where \(h(u)=(1+u)\log(1+u)-u\).
When \(p\to0\), \(np\to\infty\), and \(t\asymp np\), the
two decay exponents have orders \(np^2\) and \(np\).
Thus Bennett's bound improves the exponent by a factor of order
\(p^{-1}\) in this sparse regime.  This comparison illustrates the
importance of the variance scale.  Bennett's rate also captures a
further improvement at larger deviations: since
\(h(u)\sim u\log u\) as \(u\to\infty\), its exponent has order
\(t\log(t/v)\) when \(t\gg v\), whereas Bernstein's bound gives
an exponent of order \(t\).

For quadratic forms, Zhou~\cite{Zhou19} initiated the study of
the Hanson--Wright inequality for sparse sub-Gaussian random
vectors. He, Wang, and Zhu~\cite{HWZ24} established sparse
Hanson--Wright inequalities and obtained a Bennett-type
logarithmic gain through a combinatorial moment argument.
Further results on sparse quadratic and bilinear forms and
related missing-data models can be found in
\cite{Zhou20,PWL23,DHWZ_2025}. Our aim
is to obtain such a gain for sparse chaoses of arbitrary fixed
order.  We prove explicit moment and tail bounds for decoupled
homogeneous chaoses with arbitrary real coefficient tensors and
independent centered bounded inputs satisfying coordinate-dependent
second-moment bounds.  The new improvement concerns the
largest-entry term; the other terms match the weighted slice and
partition scales supplied by Dai and Wang~\cite{DaiWang2026}.

\bigskip
\noindent\textbf{Application.}
For simplicity, consider order-\(d\) tensors with equal mode
dimensions \(n_s=n\).  A dense random projection acts on vectors
of length \(n^d\), which can make both storage and multiplication
expensive.  A Khatri--Rao map represents each row as a tensor product
of \(d\) short vectors.  Sparse factors make this representation
cheaper to store and apply.  For a rank-one input supplied through
its factors, the expected computational cost drops from
\(O(Mdn)\) with dense factors to
\(O(Md[np+1])\), where \(M\) is the number of sketch rows
and \(p\) is the probability that a factor entry is nonzero.
The key question is whether this saving per row survives the
additional rows needed to preserve norms with the same accuracy
and success probability.

Sparse Khatri--Rao sketches have been studied previously.
Sun, Guo, Tropp, and Udell~\cite{SunGuoTroppUdell2021} introduce
tensor random projections and discuss sparse sign factors.
Chen and Jin~\cite{ChenJin2021} analyze normalized products of
Bernoulli and sub-Gaussian variables, including the two-factor
sparse-sign construction studied here.  They prove finite-set,
subspace, and constrained least-squares guarantees.
Rakhshan and Rabusseau~\cite{RakhshanRabusseau2020} study Gaussian
projections in rank-one, sums-of-rank-one, and tensor-train forms.
Ahle and Knudsen~\cite{AhleKnudsen2019} establish strong
Johnson--Lindenstrauss moment guarantees for tensor-product sketches.
For subspaces, Bujanovi{\'c}, Grubi{\v{s}}i{\'c}, Kressner, and
Lam~\cite{BujanovicGrubisicKressnerLam2026} treat two Gaussian
factors, while Beretta and Musco~\cite{BerettaMusco2026} obtain
nearly linear dependence on the subspace dimension for arbitrary
fixed tensor order, using independent isotropic factors with
uniformly bounded sub-Gaussian norm.

Our contribution is a sharper, input-dependent analysis of these
sparse maps.  We identify tensors whose energy is sufficiently
spread across entries, slices, and singular directions that sparse
factors achieve a dense-scale measurement bound.  For two-factor rank-one
inputs with no dominant coordinate in either factor, we also show
when taking \(p=o(1)\) reduces the sufficient computational cost,
at the same accuracy and success probability, compared with the
bounds of Chen and Jin \cite{ChenJin2021}. 
Section~\ref{sec:applications} gives explicit input conditions and
flop comparisons.

Beyond randomized numerical linear algebra, we expect our inequalities
to be useful for studying subgraph counts in sparse random graphs and hypergraphs,
a classical setting for polynomial concentration and upper-tail
problems~\cite{JR02,demarco2012tight,HMS22}.
We leave these developments to future work.

\bigskip
\noindent\textbf{Notation.}
For an integer \(m\geq1\), write \([m]=\{1,\ldots,m\}\), and adopt the
convention that empty products equal one.  For real numbers \(a,b\),
write \(a\vee b=\max\{a,b\}\).  Unless otherwise specified,
the letters \(C,c>0\) denote constants whose values may change from line
to line; subscripts indicate their permitted parameter dependence.  We
write \(f\lesssim_\alpha g\) if \(f\leq C_\alpha g\),
\(f\asymp_\alpha g\) if both comparison inequalities hold, and use
\(O_\alpha(\cdot)\) with the same convention; omitted subscripts indicate
universal constants.  The notation \(f\ll g\) means that \(f/g\) is
sufficiently small in the stated regime.  For a finite array
\(T=(t_\alpha)\), set $\norm{T}_{\mathrm F}
 =\left(\sum_\alpha \abs{t_\alpha}^2\right)^{1/2}$ and 
 $\norm{T}_\infty=\max_\alpha\abs{t_\alpha}.$ 
For a matrix \(B=(b_{ij})\), let \(\norm{B}_{\mathrm{op}}\) denote its
Euclidean operator norm and set
$
 \norm{B}_{2,\infty}
 =\max_i\left(\sum_j\abs{b_{ij}}^2\right)^{1/2}.
$
For a random variable \(Y\) and \(r\geq1\), write
$
 \norm{Y}_{L_r}=\left(\E\abs{Y}^r\right)^{1/r}.
$
Throughout the paper, \(\ell(x)=1+\log x\) for \(x\geq1\).

\subsection{Bennett-type inequalities for chaoses}
Fix an order \(d\geq1\) and dimensions \(n_1,\ldots,n_d\), and write
\[
\bm i=(i_1,\ldots,i_d),
\qquad
a_{\bm i}=a_{i_1,\ldots,i_d},
\]
and consider the \emph{decoupled homogeneous polynomial chaos of order $d$}:
\[
Z_A(X)
\coloneqq
\sum_{\bm i\in[n_1]\times\cdots\times[n_d]}
a_{\bm i}\prod_{s=1}^dX_{i_s}^{(s)}.
\]
Here \(A=(a_{\bm i})\) is a deterministic rectangular \(d\)-tensor,
and \(X=(X_i^{(s)})_{s\in[d],\,i\in[n_s]}\) is a mutually independent
family satisfying
\begin{align}\label{eq:assump-X}
\E X_i^{(s)}=0,
\qquad
\abs{X_i^{(s)}}\leq1\ \text{a.s.},
\qquad
\E\bigl(X_i^{(s)}\bigr)^2\leq p_i^{(s)},
\qquad
0<p_i^{(s)}\leq1.
\end{align}
Here \emph{homogeneous} means that every monomial has degree \(d\),
and \emph{decoupled} means that its \(d\) factors come from independent
input vectors.  For example, \(\sum_{i,j}a_{ij}X_i^{(1)}X_j^{(2)}\)
is a decoupled quadratic form.  The numbers \(p_i^{(s)}\) are upper
bounds on second moments.  A basic example is
\(X_i^{(s)}=\delta_i^{(s)}\varepsilon_i^{(s)}\), where
\(\delta_i^{(s)}\sim\Ber(p_i^{(s)})\) are Bernoulli random variables
and \(\varepsilon_i^{(s)}\) are Rademacher random variables.

Before stating the main theorem, we introduce notation of tensor norms consistent with
\cite{DaiWang2026,gotze2021concentration}.  For \(I\subseteq[d]\), write
\(I^c=[d]\setminus I\) and \(\bm i_I=(i_s)_{s\in I}\). For a given $d$-tensor $A$, we use
\[
 A_{\bm i_{I^c}}\coloneqq(a_{\bm i})_{\bm i_{I^c}}
\]
for the slice obtained by fixing the coordinates in \(I\) and allowing
those in \(I^c\) to vary.  Thus the subscript records the free
coordinates; the fixed coordinates are understood from context.
When \(I=\varnothing\), the slice is the full tensor \(A\), and the
maximum over fixed indices is omitted.  For any finite set \(S\), let \(\Pi(S)\) be the set of partitions of
\(S\), with the convention that
$\Pi(\varnothing)=\{\varnothing\}.$

For \(U\subseteq[d]\) and \(\cJ=\{J_1,\ldots,J_m\}\in\Pi(U)\), write
\(\abs{\cJ}=m\) for its number of blocks.  If
\(T=(t_{\bm i_U})\) is a \(U\)-tensor, define the \emph{partition norm associated with $\cJ$} by
\[
\norm{T}_{\cJ}
\coloneqq
\sup\left\{
\sum_{\bm i_U}t_{\bm i_U}
\prod_{q=1}^m x^{(q)}_{\bm i_{J_q}}:
\norm{x^{(q)}}_2\leq1,\ q\in[m]
\right\}.
\]
For \(U=\varnothing\), this norm is the absolute value of the scalar.
The one-block and singleton partitions give the Frobenius and injective
multilinear norms, respectively.
For a matrix, these are precisely
\[
 \norm A_{\{\{1,2\}\}}=\norm A_{\mathrm F},
 \qquad
 \norm A_{\{\{1\},\{2\}\}}=\norm A_{\mathrm{op}}.
\]
More generally, a two-block partition is an operator norm after
reshaping the tensor as a matrix.  Partition norms thus extend familiar
matrix norms by specifying which indices are grouped together.

For \(I\subseteq[d]\) and \(\cJ\in\Pi(I^c)\), define the weighted slice
norm \(M_{I,\cJ}(A)\) and, for \(r\geq2\), the associated moment scale
\(\Delta_I(A;r)\) by
\begin{equation}\label{eq:Delta}
\begin{aligned}
M_{I,\cJ}(A)
&\coloneqq
\max_{\bm i_I}
\left\|
\left(
a_{\bm i}
\prod_{s\in I^c}\sqrt{p_{i_s}^{(s)}}
\right)_{\bm i_{I^c}}
\right\|_{\cJ},\\
\Delta_I(A;r)
&\coloneqq
\max_{\cJ\in\Pi(I^c)}
r^{\,\abs I+\abs{\cJ}/2}M_{I,\cJ}(A).
\end{aligned}
\end{equation}
We omit the dependence on the variance proxies \(p_i^{(s)}\) from the notation.  In particular, $M_{[d],\varnothing}(A)=\norm{A}_\infty$, and 
$\Delta_{[d]}(A;r)=r^d\norm{A}_\infty$ has the highest order in $r$.

The definition has a useful interpretation: fix the coordinates in
\(I\), multiply each remaining coordinate by its standard-deviation
bound \(\sqrt{p_i^{(s)}}\), and take a familiar Euclidean or operator
norm of the resulting slice.  Each fixed coordinate contributes a
factor \(r\) to the moment estimate; each block of free coordinates
contributes \(\sqrt r\).  For a matrix and a common variance bound
\(p\) for all input coordinates, the four resulting scales are
\[
 \sqrt r\,p\norm A_{\mathrm F},\qquad
 rp\norm A_{\mathrm{op}},\qquad
 r^{3/2}\sqrt p\max\left\{\norm A_{2,\infty},
                            \norm{A^{\mathsf T}}_{2,\infty}\right\},\qquad
 r^2\norm A_\infty.
\]
Only the last scale receives the logarithmic correction in the theorem.

Write \(m_A=\norm A_\infty\).
For \(1\leq k\leq d\), collect the scales with \(k\) free coordinates:
\begin{equation}\label{eq:B_k}
B_k(A;r)=\max_{\substack{I\subseteq[d]\\\abs{I^c}=k}}\Delta_I(A;r),
 \qquad
 \lambda_A(r)=1\vee\min_{1\leq k\leq d}
 \left(\frac{r^dm_A}{B_k(A;r)}\right)^{2/k}.
\end{equation}
Also set $B_0(A;r) = \Delta_{[d]}(A;r) = r^dm_A$. The quantity \(\lambda_A(r)\) then measures how far the largest-entry
contribution \(r^dm_A\) exceeds the other associated moment scales.

\begin{theorem}[Bennett-type moment bound]
\label{thm:main}
For $d\ge 1$, let \(A\) be a nonzero real $d$-tensor, and let the mutually independent family
\(X=(X_i^{(s)})_{s\in[d],\,i\in[n_s]}\) satisfy \eqref{eq:assump-X}.  For every \(r\geq2\), 
\begin{equation}\label{eq:explicit-main-moment}
 \norm{Z_A(X)}_{L_r}
 \lesssim_d
 \max_{1\leq k\leq d}B_k(A;r)
 +\frac{r^dm_A}{\ell(\lambda_A(r))^d}.
\end{equation}
\end{theorem}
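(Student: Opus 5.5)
We argue by induction on the order \(d\), conditioning on all but one input vector. The base case \(d=1\) is the sharp Bennett moment bound for a weighted sum \(\sum_i a_iX_i\) of independent centered variables with \(\abs{X_i}\le1\) and \(\E X_i^2\le p_i\). Expanding the moment generating function, or using Lata{\l}a's moment formula~\cite{Latala97} together with the bound \(\E\abs{a_iX_i}^q\le p_i\abs{a_i}^q\) for \(q\ge2\), gives
\[
\norm{\textstyle\sum_i a_iX_i}_{L_r}\lesssim \sqrt r\,\bigl(\textstyle\sum_i p_ia_i^2\bigr)^{1/2}+\frac{r\,m_A}{\ell\bigl(r^2m_A^2/(r\sum_ip_ia_i^2)\bigr)}.
\]
The logarithm arises from optimizing the number of nonzero terms: the contribution \(\sum_i p_i\abs{a_i}^r\le m_A^{r-2}\sigma^2\) is comparable to a Poisson tail with mean \(\sigma^2/m_A^2\). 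This matches \eqref{eq:explicit-main-moment} with \(B_1=\sqrt r\,\norm{(a_i\sqrt{p_i})}_2\) and \(\lambda=(rm_A/B_1)^2\).

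For the inductive step, write \(Z_A(X)=\sum_{i}X^{(d)}_i\,Z_{A_i}(X^{(1)},\dots,X^{(d-1)})\), where \(A_i\) is the \((d-1)\)-slice with \(i_d=i\). Conditionally on the first \(d-1\) vectors this is a linear form in \(X^{(d)}\) with coefficients \(b_i=Z_{A_i}\). Apply the base case conditionally, then take \(L_r\) norms in the remaining variables using the triangle inequality in \(L_r\):
\[
\norm{Z_A}_{L_r}\lesssim \sqrt r\,\Bigl\|\bigl(\textstyle\sum_i p^{(d)}_i b_i^2\bigr)^{1/2}\Bigr\|_{L_r}+r\,\Bigl\|\frac{\max_i\abs{b_i}}{\ell(\cdot)}\Bigr\|_{L_r}.
\]
The first term is the \(L_r\) norm of a Hilbert-space-valued chaos of order \(d-1\) with coefficients \(a_{\bm i}\sqrt{p^{(d)}_{i_d}}\). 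We control it with the vector-valued version of the Dai--Wang bound~\cite{DaiWang2026}, or equivalently by induction applied to \(\sup_{\norm{x}_2\le1}\) of scalar chaoses via a net/decoupling argument. This produces exactly the \(\Delta_I\) terms with \(d\notin I\), and the contributions carrying \(r^{\abs I+\abs{\cJ}/2}\) match.

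In the second term we bound \(\max_i\abs{b_i}\le(\sum_i\abs{b_i}^r)^{1/r}\), so that \(\E\max_i\abs{b_i}^r\le\sum_i\E\abs{Z_{A_i}}^r\). The induction hypothesis applied to each \(A_i\) then yields \(\Delta_I\) terms with \(d\in I\) and the term \(r^{d-1}m_A/\ell^{d-1}\). Multiplying by \(r/\ell\) gives \(r^dm_A/\ell^d\).

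The main obstacle is that the argument of the logarithm is random and depends on the slices. We therefore do not run the induction naively. Instead we fix a deterministic threshold and split into two regimes.

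1. If \(\lambda_A(r)\) is bounded, the claim follows directly from the non-Bennett bound of Dai and Wang~\cite{DaiWang2026} with \(\alpha=1\), since then \(r^dm_A\lesssim_d\max_kB_k\).
2. Otherwise we truncate. On the event that the conditional variance proxy \(\sum_ip^{(d)}_ib_i^2\) is at most a constant times its \(L_{r/2}\)-size, the logarithm is at least \(\ell(\lambda_A(r))\) up to constants. The complementary event costs only the first term. This uses the fact that \(\ell\) is slowly varying, so \(\ell(c\lambda)\asymp\ell(\lambda)\).

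To carry this out, we prove a stronger inductive statement with \(\lambda\) replaced by any deterministic \(\lambda\le\lambda_A(r)\). Monotonicity of \(\ell\) then handles the per-slice quantities, since \(\lambda_{A_i}(r)\gtrsim\lambda_A(r)\): slices have smaller \(B_k\) and the same or smaller \(m\), after checking the exponent \(2/k\) bookkeeping. This is the point we expect to require the most care.
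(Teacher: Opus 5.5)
There is a genuine gap, located exactly where you plan to collect the remaining $d-1$ logarithms. Bounding $\E\max_i\abs{b_i}^r$ by $\sum_i\E\abs{Z_{A_i}}^r$ and then applying the induction hypothesis slice by slice sums $n_d$ slice bounds, which loses a factor $n_d^{1/r}$. More fundamentally, a maximum over $n_d$ slice chaoses does not inherit the slice bound $r^{d-1}m_A/\ell^{d-1}$. Conditioning therefore yields only the single logarithm produced by the conditional Bennett step.

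Here is a concrete check. Take $d=2$, $a_{ij}=\ind_{\{i=j\}}$, sparse Rademacher inputs with $p=n^{-2/3}$, and $r=2$. Then $b_i=X^{(1)}_i$. The conditional variance proxy is $pK$, where $K=\#\{i:X^{(1)}_i\neq0\}$ is concentrated near $n^{1/3}$, and $\max_i\abs{b_i}=1$ whenever $K\geq1$. Hence your second term $2\norm{\max_i\abs{b_i}/\ell(2/(pK))}_{L_2}$ is of order $1/\log n$. On the other hand, $B_1(A;2)=2^{3/2}n^{-1/3}$, $B_2(A;2)=\sqrt2\,n^{-1/6}$ and $\lambda_A(2)=2\sqrt2\,n^{1/6}$, so the right side of \eqref{eq:explicit-main-moment} is of order $(\log n)^{-2}$.

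In this example the conditional logarithm $\ell(2/(pK))\approx\frac13\log n$ already exceeds $\ell(\lambda_A(2))\approx\frac16\log n$. The loss also occurs on the typical event. So no deterministic threshold, truncation, or slow-variation argument recovers the missing factor: the randomness of the logarithm's argument is not the real obstacle.

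There are two smaller issues. Your first term needs a Hilbert-space-valued sparse chaos bound, which the scalar Dai--Wang estimate does not provide, and a net over the sphere of $\mathbb R^{n_d}$ costs a dimension-dependent factor. Even your regime 1 needs a comparison step, since inputs satisfying \eqref{eq:assump-X} need not have the product form required in \cref{lem:sparse-exponential-chaos}.

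The paper avoids conditioning altogether and obtains all $d$ logarithms at once, with one deterministic parameter $N$. After symmetrization, each coordinate $V$ satisfies $\E V^m\leq p\leq\E\bigl(\sqrt e\,S/\ell(N)\bigr)^m$ for all even $m\geq2$. Here $S$ is a sum of $N$ independent sparse symmetric exponentials with the same $p$, and the inequality holds because $\E S^m\geq Np\,m!\geq p\,\ell(N)^m/e$. The chaos is affine in each coordinate and $r$ is even, so coordinates can be replaced one at a time (\cref{lem:replicated-moment-comparison}). This gives $\norm{Z_A(X)}_{L_r}\leq(2\sqrt e)^d\ell(N)^{-d}\norm{Z_{A^{[N]}}(Y)}_{L_r}$. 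The replicated chaos is then bounded by Dai--Wang. Its weighted norms scale exactly by $N^{\abs{I^c}/2}$, while the entry term $r^dm_A$ does not grow (\cref{lem:norm-scaling}). Finally, $N=\lambda_A(r)$ is the largest level that keeps every other term below the entry term.
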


The first term of \eqref{eq:explicit-main-moment} collects the usual
weighted slice and partition scales.  When \(\lambda_A(r)>1\), the
largest-entry contribution dominates, and the bound simplifies to
\begin{equation}\label{eq:effective-main-moment}
 \norm{Z_A(X)}_{L_r}
 \lesssim_d\frac{r^dm_A}{\ell(\lambda_A(r))^d}.
\end{equation}
Indeed, \(\lambda_A(r)>1\) implies \(B_k(A;r)\leq r^dm_A\lambda_A(r)^{-k/2}\), and these terms
are absorbed into the right-hand side up to a constant depending only
on \(d\).  The gain over \(r^dm_A\) grows with \(\lambda_A(r)\). Next, we state the tail inequality that follows directly from the moment bounds in Theorem \ref{thm:main}.

\begin{corollary}[Bennett-type tail bound]
\label{cor:main-tail}
Under the assumptions of \cref{thm:main}, write
\(\kappa_{I,\cJ}=\abs I+\abs{\cJ}/2\) and, for \(t>0\), set
\begin{equation}\label{eq:explicit-tail-separation}
 \eta_A(t)=1\vee
 \min_{\substack{I\subsetneq[d]\\\cJ\in\Pi(I^c)}}
 \left[
  \frac{m_A}{M_{I,\cJ}(A)}
  \left(\frac{t}{m_A}\right)^{1-\kappa_{I,\cJ}/d}
 \right]^{2/\abs{I^c}}.
\end{equation}
Then
\begin{equation}\label{eq:explicit-mixed-tail}
 \Pp\left(\abs{Z_A(X)}\geq t\right)
 \leq2\exp\left[-c_d\min\left\{
 \min_{\substack{I\subsetneq[d]\\\cJ\in\Pi(I^c)}}
 \left(\frac{t}{M_{I,\cJ}(A)}\right)^{1/\kappa_{I,\cJ}},\,
 \left(\frac{t}{m_A}\right)^{1/d}\ell(\eta_A(t))
 \right\}\right].
\end{equation}
\end{corollary}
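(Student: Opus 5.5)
The plan is the standard route from moments to tails: apply Markov's inequality to $\norm{Z_A(X)}_{L_r}^r$ with a well-chosen $r\geq2$. For $r\geq2$, \cref{thm:main} and \eqref{eq:Delta} give
\[
 \norm{Z_A(X)}_{L_r}\leq C_d\Bigl(\max_{I\subsetneq[d],\,\cJ\in\Pi(I^c)} r^{\kappa_{I,\cJ}}M_{I,\cJ}(A)+\frac{r^dm_A}{\ell(\lambda_A(r))^d}\Bigr).
\]
So $\Pp(\abs{Z_A}\geq t)\leq \exp(-r)$ as soon as the right-hand side is at most $t/e$. Write $\Phi(t)$ for the minimum inside the exponent of \eqref{eq:explicit-mixed-tail}. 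We take $r=c_d\Phi(t)$ for a small constant $c_d$. If this $r$ is below $2$, then $2e^{-c_d\Phi}\geq 2e^{-2}$, and the bound holds trivially once $c_d$ is small (the right side then exceeds $1$ after adjusting constants). So we may assume $r\geq2$.

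For the non-extremal terms, $r\leq c_d(t/M_{I,\cJ})^{1/\kappa_{I,\cJ}}$ gives $r^{\kappa_{I,\cJ}}M_{I,\cJ}\leq c_d^{\kappa_{I,\cJ}}t$. Since $\kappa_{I,\cJ}\geq1/2$, this is at most $t/(2eC_d\cdot\#\text{terms})$ for small $c_d$. The only real work is the largest-entry term $r^dm_A/\ell(\lambda_A(r))^d$. Set $u=(t/m_A)^{1/d}$, so that $r\leq c_d u\,\ell(\eta_A(t))$. We need
\[
 r\leq c' u\,\ell(\lambda_A(r)),
\]
which yields $r^dm_A/\ell(\lambda_A(r))^d\leq c'^d t$.

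The key step, and the main obstacle, is comparing $\lambda_A(r)$ with $\eta_A(t)$. By definition,
\[
 \lambda_A(r)^{\abs{I^c}/2}\geq \min_{I,\cJ}\frac{r^{d-\kappa_{I,\cJ}}m_A}{M_{I,\cJ}},
\]
with $\lambda_A\geq 1$. Note that $d-\kappa_{I,\cJ}=\abs{I^c}-\abs{\cJ}/2\geq \abs{I^c}/2>0$ for $I\neq[d]$.

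If $r\geq u$, then $r^{d-\kappa}\geq u^{d-\kappa}=(t/m_A)^{1-\kappa/d}$, hence $\lambda_A(r)\geq\eta_A(t)$. In that case $r\leq c_du\,\ell(\eta_A)\leq c_du\,\ell(\lambda_A(r))$, as required.

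If $r<u$, the term is at most $r^dm_A<u^dm_A=t$, which is not good enough; we need a constant-factor gain. We get it from $r\leq c_d u\,\ell(\eta)$ together with the minimum's other branch. We split according to whether $c_d\ell(\eta)\leq 1$, or argue via monotonicity. The function $\rho\mapsto \rho/\ell(\lambda_A(\rho))$ is nondecreasing up to constants: $\lambda_A(\rho)$ grows polynomially in $\rho$, while $\ell$ grows only logarithmically. So it suffices to verify the inequality at $\rho=r$ directly, using $\lambda_A(r)\geq (r/u)^{\gamma}\eta_A(t)$ for some exponent $\gamma\leq 2d$. This gives
\[
 \ell(\lambda_A(r))\geq \ell(\eta_A)-2d\log(u/r).
\]
Writing $r=\theta u\,\ell(\eta)$ with $\theta\leq c_d$, we have $\log(u/r)=\log(1/(\theta\ell(\eta)))$. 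A short calculus check of
\[
 \theta\,\ell(\eta)\leq c'\bigl(\ell(\eta)-2d\log(1/(\theta\ell(\eta)))\bigr)\vee c'
\]
then closes the case. When the right side becomes small we use $\ell\geq1$ together with $r\leq c' u$. We expect this calculus verification to be the most delicate step, and we would handle it by splitting into $\theta\ell(\eta)\leq c'$ and $\theta\ell(\eta)>c'$.

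Combining the pieces, the full moment bound is at most $t/e$, and Markov's inequality yields $\Pp(\abs{Z_A}\geq t)\leq e^{-r}$. This gives \eqref{eq:explicit-mixed-tail} with the factor $2$ absorbing the small-$r$ case.
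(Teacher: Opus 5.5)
Your proposal is correct, but it takes a different route from the paper.

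\textbf{The paper's route.} The paper does not pass through \cref{thm:main}. It returns to the replication bound, whose tail form \eqref{eq:replication-mixed-tail} holds for every fixed $N\geq1$, and takes $N=\eta_A(t)$, so the replication level depends on $t$. The definition of $\eta_A(t)$ then gives directly $N^{|I^c|/2}M_{I,\cJ}(A)\,q_0^{\kappa_{I,\cJ}}\leq t$, with $q_0=(t/m_A)^{1/d}$. Hence every rate in \eqref{eq:replication-mixed-tail} is at least $q_0\ell(\eta_A(t))$, with equality for the entry term. No comparison between a moment-level and a tail-level parameter is needed. When $\eta_A(t)>1$ this even yields the stronger bound $2e^{-c_dq_0\ell(\eta_A(t))}$.

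\textbf{Your route.} You use \cref{thm:main} as a black box, where the replication level $\lambda_A(r)$ is already tied to $r$. You must therefore compare $\lambda_A(r)$ with $\eta_A(t)$ at $r=c_d\Phi(t)$, and that comparison does work:
\begin{itemize}
\item $\eta_A(t)$ is exactly the formula for $\lambda_A$ evaluated at $u=(t/m_A)^{1/d}$.
\item Each candidate in the minimum scales like $r^{(2|I^c|-|\cJ|)/|I^c|}$, with exponent in $[1,2)$. Monotonicity then handles $r\geq u$, and your bound $\ell(\lambda_A(r))\geq\ell(\eta_A(t))-2d\log(u/r)$ holds for $r<u$.
\item Your proposed split closes the case $r<u$. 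Put $x=r/u<1$ and $L=\ell(\eta_A(t))$. Either $x\leq c'$ and you are done, or $x>c'$. In the latter case $L\geq x/c_d>c'/c_d$ and $\log(1/x)<\log(1/c')$. Then $c'(L-2d\log(1/c'))\geq c_dL\geq x$ once $c_d\leq c'/2$ and $c'/c_d\geq 4d\log(1/c')$.
\end{itemize}
The heuristic remark that $\rho/\ell(\lambda_A(\rho))$ is nondecreasing up to constants is not needed; the explicit lower bound on $\ell(\lambda_A(r))$ does the work.

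\textbf{A small slip.} $2e^{-2}<1$, so for $r<2$ the bound is not yet trivial. You must first shrink the exponent constant, for example to $c_d(\log 2)/2$.

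\textbf{Comparison.} Your argument is self-contained given \cref{thm:main}. It needs only Markov's inequality, with no reopening of the replication argument and no appeal to the external moment-to-tail proposition used in the proof of \cref{prop:replication-profile}. The cost is a case analysis. The paper's $t$-dependent choice of $N$ makes the proof a few lines long and gives the sharper, entry-only tail in the logarithmic regime.
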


For Bernoulli random variables times independent bounded amplitudes,
the terms \(B_k(A;r)\), \(1\leq k\leq d\),
match the corresponding scales in the moment estimates of Dai and
Wang~\cite{DaiWang2026}, up to constants depending only on \(d\).

Both \cref{thm:main,cor:main-tail} concern decoupled chaoses.
Our proof uses the sparse-exponential moment estimate of Dai and Wang
in this setting, recorded in \cref{lem:sparse-exponential-chaos}.
The new feature of \cref{thm:main} is the logarithmic improvement of
the largest-entry contribution, replacing \(r^dm_A\) by
\(r^dm_A/\ell(\lambda_A(r))^d\), without changing the other scales.
Correspondingly, the logarithm in \eqref{eq:explicit-mixed-tail}
improves the largest-entry rate \((t/m_A)^{1/d}\) when
\(\eta_A(t)\) is large.

\medskip
\noindent\textbf{Extensions.}
Two extensions are recorded in \cref{app:extensions}.  First, standard decoupling \cite{deLaPenaMontgomerySmith1995}
transfers the bounded-input estimates, including the logarithmic
improvement, to diagonal-free non-decoupled chaoses. This includes the quadratic form below.  Second, we extend the 
Bennett-type moment bounds to sparse \(\alpha\)-sub-exponential
inputs with \(1<\alpha\leq\infty\), in both decoupled and non-decoupled
form.  The logarithmic power is determined by
\(\beta=1-1/\alpha\), with \(\alpha=\infty\) recovering the bounded case.

Below, we unpack the main result in the cases \(d=1,2,3\).
After recalling the familiar degree-one bound, we focus on the
quadratic and cubic moment bounds in the regimes where the
logarithmic correction is effective, together with explicit tail
bounds valid for every \(t>0\).  The tail bounds use the
thresholds \(\gamma_2(A)\) and \(\gamma_3(A)\) defined below to
express the logarithmic correction without an additional parameter.

\medskip
\noindent\textbf{The linear case ($d=1$).}
For \(S=\sum_i a_iX_i\), consider \(V^2\coloneqq\sum_i a_i^2p_i>0\) and
\(a_{\max}\coloneqq\max_i\abs{a_i}\).  The theorem recovers, up to universal constants, the moment bound implied by Bennett’s inequality~\cite{bennett1962probability}:
\begin{equation}\label{eq:linear-bennett-moment}
 \norm S_{L_r}\lesssim\sqrt r\,V+
 \frac{ra_{\max}}{\ell(1\vee ra_{\max}^2/V^2)},\qquad r\geq2.
\end{equation}

\medskip
\noindent\textbf{The quadratic case ($d=2$).} The following Bennett-type
moment and tail bounds give a sparse analogue of the classical
Hanson--Wright inequality~\cite{RV13} for bounded inputs.
For nonnegative coefficient matrices, the comparison below shows how
these bounds refine the corresponding estimates of He, Wang, and
Zhu~\cite{HWZ24}.
\begin{corollary}[Quadratic Bennett bounds]\label{cor:quadratic-moment}
Let $A\in \mathbb R^{n\times n}$ be a nonzero symmetric matrix with
zero diagonal entries.  Let \(X_1,\ldots,X_n\) be independent and centered random variables with
\(\abs{X_i}\leq1\) and \(\E X_i^2\leq p_i\in(0,1]\).
Set \(m_A=\norm A_\infty\),
\(D_p=\operatorname{diag}(\sqrt{p_1},\ldots,\sqrt{p_n})\), and
% \todo{We should make the notation clearer: so far $\gamma_2(A), \eta_A(t), \lambda_A(r)$ are almost the same thing. $\eta_A(t)$ corresponds to the log factor in tail bounds and $\lambda_A(r)$ for moments. We should either not introducing $\gamma_2$ (like in Thm 1.1 and Cor 1.2), or use $\gamma$ everywhere and abandon $\eta_A, \lambda_A$ (like here).}
\[
 \gamma_2(A)=\max\left\{
 \frac{\norm{AD_p}_{2,\infty}^2}{m_A^2},\,
 \left(\frac{\norm{D_pAD_p}_{\mathrm F}}{m_A}\right)^{2/3},\,
 \frac{\norm{D_pAD_p}_{\mathrm{op}}}{m_A}
 \right\}.
\]
For every \(r\geq\max\{2,e\,\gamma_2(A)\}\),
\begin{equation}\label{eq:quadratic-moment}
 \norm{X^{\mathsf T}AX}_{L_r}
 \lesssim\frac{r^2m_A}{[1+\log(r/\gamma_2(A))]^2}.
\end{equation}
Moreover, for every \(t>0\),
\begin{equation}\label{eq:quadratic-tail}
\begin{aligned}
 \Pp\left(\abs{X^{\mathsf T}AX}\geq t\right)
 &\leq2\exp\Bigg[-c\min\Bigg\{
 \frac{t^2}{\norm{D_pAD_p}_{\mathrm F}^2},\,
 \frac{t}{\norm{D_pAD_p}_{\mathrm{op}}},\\
 &\hspace{14mm}
 \left(\frac{t}{\norm{AD_p}_{2,\infty}}\right)^{2/3},\,
 \sqrt{\frac{t}{m_A}}\,
 \ell\left(1\vee\frac{\sqrt{t/m_A}}{\gamma_2(A)}\right)
 \Bigg\}\Bigg].
\end{aligned}
\end{equation}
\end{corollary}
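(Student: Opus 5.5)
The plan is to reduce to the decoupled case and then specialize \cref{thm:main} and \cref{cor:main-tail} to \(d=2\).

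\textbf{Step 1: decoupling.} Since \(A\) is symmetric with zero diagonal, \(X^{\mathsf T}AX=\sum_{i\neq j}a_{ij}X_iX_j\) is a diagonal-free chaos. The decoupling inequality of de la Pe\~na and Montgomery-Smith \cite{deLaPenaMontgomerySmith1995} gives
\[
\norm{X^{\mathsf T}AX}_{L_r}\asymp\norm{X^{\mathsf T}AX'}_{L_r},
\]
and a comparable statement for tails (with absolute constants in the probabilities and in \(t\)). Here \(X'\) is an independent copy of \(X\). The pair \((X,X')\) satisfies \eqref{eq:assump-X} with \(p^{(1)}=p^{(2)}=p\). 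So it suffices to bound \(Z_A=\sum_{i,j}a_{ij}X_iX'_j\).

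\textbf{Step 2: the scales.} With \(d=2\), I enumerate \(I\subseteq[2]\) and \(\cJ\in\Pi(I^c)\):
\begin{itemize}
\item \(I=\varnothing\), one block: \(\sqrt r\norm{D_pAD_p}_{\mathrm F}\).
\item \(I=\varnothing\), two blocks: \(r\norm{D_pAD_p}_{\mathrm{op}}\).
\item \(\abs I=1\): \(r^{3/2}\norm{AD_p}_{2,\infty}\), using symmetry so that both row and column versions coincide.
\item \(I=[2]\): \(r^2m_A\).
\end{itemize}
Hence \(B_1=r^{3/2}\norm{AD_p}_{2,\infty}\) and \(B_2=\max\{\sqrt r\norm{D_pAD_p}_{\mathrm F},\,r\norm{D_pAD_p}_{\mathrm{op}}\}\). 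Computing the ratios gives
\[
(r^2m_A/B_1)^2=\frac{r\,m_A^2}{\norm{AD_p}_{2,\infty}^2},
\]
and \(r^2m_A/B_2\) is the minimum of \(r^{3/2}m_A/\norm{D_pAD_p}_{\mathrm F}\) and \(r\,m_A/\norm{D_pAD_p}_{\mathrm{op}}\). Each of these three quantities is at least \(r/\gamma_2(A)\), because
\[
r^{3/2}/\norm{\cdot}_{\mathrm F}=\bigl(r/(\norm{\cdot}_{\mathrm F}/m_A)^{2/3}\bigr)^{3/2}\geq r/\gamma
\]
whenever \(r/\gamma\geq1\), and similarly for the other two. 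So \(\lambda_A(r)\geq r/\gamma_2(A)\geq e\).

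\textbf{Step 3: the moment bound.} Each \(B_k\) is at most \(r^2m_A(r/\gamma_2)^{-k/2}\). For \(x=r/\gamma_2\geq e\) we have \(x^{-1/2}\lesssim\ell(x)^{-2}\), so \(B_k\lesssim r^2m_A/\ell(r/\gamma_2)^2\). Also \(\ell(\lambda_A)\geq\ell(r/\gamma_2)\). Then \cref{thm:main} yields \eqref{eq:quadratic-moment}.

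\textbf{Step 4: the tail bound.} I apply \cref{cor:main-tail} to the decoupled form. The exponents \(1/\kappa\) are \(2,1,2/3\), which produce the first three terms of \eqref{eq:quadratic-tail}. For the largest-entry term I need
\[
\ell(\eta_A(t))\gtrsim\ell\bigl(1\vee\sqrt{t/m_A}/\gamma_2\bigr).
\]
Writing \(u=\sqrt{t/m_A}\), the bracketed terms in \(\eta_A\) raised to \(2/\abs{I^c}\) are:
\begin{itemize}
\item \(I=\{1\}\): \(u^2m_A^2/\norm{AD_p}_{2,\infty}^2\).
\item \(I=\varnothing\), one block: \(u^{3}m_A/\norm{D_pAD_p}_{\mathrm F}\).
\item \(I=\varnothing\), two blocks: \(u^2m_A/\norm{D_pAD_p}_{\mathrm{op}}\).
\end{itemize}
Each is at least \((u/\gamma_2)^{\theta}\) for some \(\theta\in\{1,3/2,2\}\) when \(u\geq\gamma_2\). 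This matches the relation between \(r\) and \(u\) implicit in the corollary's proof, where \(r\approx u\). When \(u<\gamma_2\) the logarithm is \(1\) and there is nothing to prove.

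Since \(\ell(x^\theta)\asymp_\theta\ell(x)\), the claim follows. The final step is to absorb the decoupling constants into \(c\) and the prefactor \(2\).

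\textbf{Main obstacle.} I expect the delicate point to be transferring the tail bound through decoupling while keeping the logarithmic factor. Decoupling rescales \(t\) by a constant, and \(\ell(\cdot)\) is insensitive to this. Alternatively, I would derive the tail bound directly from the decoupled moment bounds via Markov's inequality, choosing \(r\asymp\) the minimum of the four rates, and checking \(r\gtrsim\gamma_2\) in the logarithmic regime.
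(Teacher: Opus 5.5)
Your proposal is the paper's proof with two harmless variations, plus one computational error in Step 4 that needs fixing. The common skeleton is: decouple, read off $B_1(A;r)=r^{3/2}\norm{AD_p}_{2,\infty}$ and $B_2(A;r)=\max\{\sqrt r\,\norm{D_pAD_p}_{\mathrm F},\,r\norm{D_pAD_p}_{\mathrm{op}}\}$, show $\lambda_A(r)\geq r/\gamma_2(A)$ and $\eta_A(t)\geq 1\vee\sqrt{t/m_A}/\gamma_2(A)$, and substitute.

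The two variations are fine. First, you absorb the $B_k$ by hand via $x^{-1/2}\lesssim\ell(x)^{-2}$ instead of quoting \eqref{eq:effective-main-moment}. Second, you decouple tails after applying \cref{cor:main-tail}, which costs $t\mapsto t/C$ and a prefactor $2C$. Both are absorbable, using $\ell(1\vee y/a)\geq\ell(1\vee y)/\ell(a)$ for $a\geq1$ and the fact that probabilities are at most one. The paper instead decouples the moment bound first, so $\eta_A$ is evaluated at $t$ itself.

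The error is in your list of candidates in $\eta_A$. The bracket in \eqref{eq:explicit-tail-separation} carries $(t/m_A)^{1-\kappa_{I,\cJ}/d}=u^{2-\kappa_{I,\cJ}}$. You used $(t/m_A)^{2-\kappa_{I,\cJ}}$ instead, which doubles every exponent of $u$. The correct candidates are
\[
 \frac{u\,m_A^2}{\norm{AD_p}_{2,\infty}^2},\qquad
 \frac{u^{3/2}m_A}{\norm{D_pAD_p}_{\mathrm F}},\qquad
 \frac{u\,m_A}{\norm{D_pAD_p}_{\mathrm{op}}}.
\]
These are exactly your Step 2 ratios with $r$ replaced by $u$, as your own remark $r\approx u$ predicts. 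For $u\geq\gamma_2(A)$ they are at least $u/\gamma_2(A)$, $(u/\gamma_2(A))^{3/2}$ and $u/\gamma_2(A)$, so $\theta\in\{1,3/2\}$ and $\eta_A(t)\geq1\vee u/\gamma_2(A)$ follows directly.

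The fix is not cosmetic. With your exponents, the claimed bound by $(u/\gamma_2(A))^\theta$ fails when $\gamma_2(A)<1$ and $\gamma_2(A)\leq u<1$. There your first candidate can equal $u^2/\gamma_2(A)$, which is smaller than $u/\gamma_2(A)\leq(u/\gamma_2(A))^\theta$ for every $\theta\geq1$. With the corrected candidates, the rest of your argument goes through.
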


For nonnegative \(A\), the two intermediate scales for inputs of the
form \(X_i=\delta_i\xi_i\), with independent Bernoulli random variables
\(\delta_i\) and centered amplitudes \(\xi_i\) bounded by one,
improve on the weighted row-sum control in He, Wang, and
Zhu~\cite[Lemma~6.3]{HWZ24}.  Indeed, with
\(\gamma=\max_i\sum_{j\neq i}a_{ij}p_j\),
\[
 \norm{AD_p}_{2,\infty}^2\leq\norm A_\infty\gamma,
 \qquad \norm{D_pAD_p}_{\mathrm{op}}\leq\gamma.
\]
The first inequality uses \(a_{ij}^2\leq\norm A_\infty a_{ij}\);
the second follows because \(D_pAD_p\) is similar to the nonnegative
matrix \(AD_p^2\), whose spectral radius is at most its largest row
sum.  Thus both the row-norm and operator-norm terms in
\(\gamma_2(A)\) are bounded by \(\gamma/\norm A_\infty\), the
corresponding parameters in \cite{HWZ24}.  The Frobenius
contribution is unchanged, while the row and operator scales are
no larger and can be strictly smaller.

\medskip
\noindent\textbf{The cubic case ($d=3$).}
For simplicity, take \(n_1=n_2=n_3=n\) and let
\(A\in\mathbb R^{n\times n\times n}\).  Write
\[
 \norm A_{\mathrm{inj}}
 =\sup_{\norm x_2,\norm y_2,\norm z_2\leq1}
 \Big|\sum_{i,j,k}a_{ijk}x_i y_j z_k\Big|
\]
for its injective norm.
The random variable of interest is the cubic chaos
\[
 Z_A(X)=\sum_{i,j,k=1}^{n}
 a_{ijk}X_i^{(1)}X_j^{(2)}X_k^{(3)},
\]
where all coordinates of the three input vectors
\(X^{(s)}=(X_i^{(s)})_{i=1}^{n}\), \(s=1,2,3\), are mutually
independent, centered, and bounded in absolute value by one.
For simplicity, assume all coordinates share a single variance upper bound
\(p\in(0,1]\):
\[
 \operatorname{Var}(X_i^{(s)})
 =\E\bigl[(X_i^{(s)})^2\bigr]\leq p,
 \qquad i=1,\ldots,n,\quad s=1,2,3.
\]

\begin{corollary}[Cubic Bennett bounds]
\label{cor:rectangular-cubic}
Suppose that the assumptions of \cref{thm:main} hold with \(d=3\)
and \(n_1=n_2=n_3=n\), and that \(p_i^{(s)}=p\in(0,1]\)
for all \(s\in[3]\) and \(i\in[n]\).  Set
\(m_A =\norm A_\infty\) and define
\[
\begin{aligned}
 V_1&=\sqrt p\max_{s\in[3]}
       \max_{\bm i_{\{s\}^c}}\norm{A_{\bm i_{\{s\}}}}_2,
 &V_2&=p\max_{s<u}
       \max_{\bm i_{\{s,u\}^c}}\norm{A_{\bm i_{\{s,u\}}}}_{\mathrm F},\\
 W_2&=p\max_{s<u}
       \max_{\bm i_{\{s,u\}^c}}\norm{A_{\bm i_{\{s,u\}}}}_{\mathrm{op}},
 &V_3&=p^{3/2}\norm A_{\mathrm F},\\
 W_3&=p^{3/2}\max_{s\in[3]}
       \norm A_{\{\{s\},[3]\setminus\{s\}\}},
 &I_3&=p^{3/2}\norm A_{\mathrm{inj}}.
\end{aligned}
\]
Define
\begin{equation}\label{eq:cubic-threshold}
 \gamma_3(A)=\max\left\{
 \left(\frac{V_1}{m_A}\right)^2,\,
 \left(\frac{V_2}{m_A}\right)^{2/3},\,
 \frac{W_2}{m_A},\,
 \left(\frac{V_3}{m_A}\right)^{2/5},\,
 \left(\frac{W_3}{m_A}\right)^{1/2},\,
 \left(\frac{I_3}{m_A}\right)^{2/3}
 \right\}.
\end{equation}
For every \(r\geq\max\{2,e\,\gamma_3(A)\}\),
\begin{equation}\label{eq:rectangular-cubic-moment}
 \norm{Z_A(X)}_{L_r}
 \lesssim\frac{r^3m_A}{[1+\log(r/\gamma_3(A))]^3}.
\end{equation}
Moreover, for every \(t>0\),
\begin{equation}\label{eq:rectangular-cubic-tail}
\begin{aligned}
 \Pp\left(\abs{Z_A(X)}\geq t\right)
 &\leq2\exp\Bigg[-c\min\Bigg\{
 \left(\frac{t}{V_1}\right)^{2/5},\,
 \left(\frac{t}{V_2}\right)^{2/3},\,
 \left(\frac{t}{W_2}\right)^{1/2},\\
 &\hspace{14mm}
 \frac{t^2}{V_3^2},\,
 \frac{t}{W_3},\,
 \left(\frac{t}{I_3}\right)^{2/3},\,
 \left(\frac{t}{m_A}\right)^{1/3}
 \ell\left(1\vee\frac{(t/m_A)^{1/3}}{\gamma_3(A)}\right)
 \Bigg\}\Bigg].
\end{aligned}
\end{equation}
\end{corollary}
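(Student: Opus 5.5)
This corollary is the specialization of \cref{thm:main} and \cref{cor:main-tail} to \(d=3\) with a common variance bound \(p\). The plan is to identify the scales \(B_k(A;r)\) with \(V_1,V_2,W_2,V_3,W_3,I_3\), then translate the threshold \(\lambda_A(r)\) into \(\gamma_3(A)\).

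\textbf{Step 1: dictionary of scales.} For \(d=3\) I enumerate the pairs \((I,\cJ)\) with \(I\subsetneq[3]\), together with their exponents \(\kappa_{I,\cJ}=\abs I+\abs\cJ/2\).
\begin{itemize}
\item[(a)] \(\abs I=2\) (one free coordinate): the only partition is a single block, so \(M_{I,\cJ}\) is \(\sqrt p\) times the maximal fiber norm. This gives \(V_1\), with \(\kappa=5/2\).
\item[(b)] \(\abs I=1\) (two free coordinates): the one-block partition gives a Frobenius-type quantity \(V_2\) with \(\kappa=3/2\). The two-singleton partition gives an operator-type quantity \(W_2\) with \(\kappa=2\).
\item[(c)] \(I=\varnothing\): the three partition types give \(V_3\) (\(\kappa=1/2\)), \(W_3\) (\(\kappa=1\)) and \(I_3\) (\(\kappa=3/2\)).
\end{itemize}
Taking maxima over the choices of \(s,u\) absorbs the max over \(I\) of fixed size. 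Hence
\[
B_1=r^{5/2}V_1,\qquad B_2=\max\{r^{3/2}V_2,\,r^2W_2\},\qquad B_3=\max\{r^{1/2}V_3,\,rW_3,\,r^{3/2}I_3\}.
\]

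\textbf{Step 2: the moment bound.} We need a lower bound on \(\lambda_A(r)\) in terms of \(r/\gamma_3\). I compute \((r^3m_A/B_k)^{2/k}\) for each \(k\):
\begin{itemize}
\item[(a)] \(k=1\): this gives \(r(m_A/V_1)^2\ge r/\gamma_3\).
\item[(b)] \(k=2\): this gives \(\min\{r^{3/2}m_A/V_2,\ r\,m_A/W_2\}\). Since \(V_2/m_A\le\gamma_3^{3/2}\), the first entry is at least \((r/\gamma_3)^{3/2}\), and the second is at least \(r/\gamma_3\).
\item[(c)] \(k=3\): the three entries are \((r^{5/2}m_A/V_3)^{2/3}\ge (r/\gamma_3)^{5/3}\), \((r^2m_A/W_3)^{2/3}\ge (r/\gamma_3)^{4/3}\), and \((r^{3/2}m_A/I_3)^{2/3}\ge r/\gamma_3\).
\end{itemize}
When \(r\ge e\gamma_3\), every quantity is at least \(r/\gamma_3\ge e>1\). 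So \(\lambda_A(r)\ge r/\gamma_3\), and \(\ell(\lambda_A)\ge 1+\log(r/\gamma_3)\). The simplified bound \eqref{eq:effective-main-moment} then yields \eqref{eq:rectangular-cubic-moment}. If \(\gamma_3=0\) is a concern (it is not, since \(A\neq0\) forces \(V_1>0\)), this is harmless.

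\textbf{Step 3: the tail bound.} I read off \eqref{eq:explicit-mixed-tail} directly. The terms \((t/M_{I,\cJ})^{1/\kappa}\) reproduce the six listed rates: exponent \(2/5\) for \(V_1\), \(2/3\) for \(V_2\), \(1/2\) for \(W_2\), \(2\) for \(V_3\), \(1\) for \(W_3\), and \(2/3\) for \(I_3\).

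For the last term, I lower-bound \(\eta_A(t)\) by \(1\vee\tau/\gamma_3\), where \(\tau=(t/m_A)^{1/3}\). Each bracket in \eqref{eq:explicit-tail-separation} equals \(\bigl[(m_A/M)\,\tau^{3-\kappa}\bigr]^{2/\abs{I^c}}\). This is exactly the Step 2 expression with \(r\) replaced by \(\tau\), so the same computation gives each bracket \(\ge (\tau/\gamma_3)^{q}\) with \(q\in\{1,3/2,5/3,4/3\}\). When \(\tau\ge\gamma_3\), each such power is \(\ge\tau/\gamma_3\). When \(\tau<\gamma_3\), the claimed quantity is \(1\) and \(\eta_A\ge1\) trivially. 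Since \(\ell\) is monotone, the corollary follows.

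\textbf{Main obstacle.} The only delicate point is the exponent bookkeeping in Step 2 and the choice of powers in \(\gamma_3\) that makes each exponent \(q\ge1\). The argument also needs \(\max(1,x^q)\ge\max(1,x)\) for \(q\ge1\). Everything else is substitution into \cref{thm:main} and \cref{cor:main-tail}.
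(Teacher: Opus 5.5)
Your proposal is correct and follows the paper's proof essentially step for step. You use the same identification of $B_1,B_2,B_3$ with $V_1,V_2,W_2,V_3,W_3,I_3$, the same lower bound $\lambda_A(r)\ge r/\gamma_3(A)$ fed into the effective moment bound, and the same substitution $r\to(t/m_A)^{1/3}$ to get $\eta_A(t)\ge 1\vee (t/m_A)^{1/3}/\gamma_3(A)$. The only cosmetic difference is that you verify the six exponents $2(3-\kappa)/k\in\{1,3/2,5/3,4/3\}$ case by case, whereas the paper uses the uniform inequality $3-\kappa=k-\abs{\cJ}/2\ge k/2$.
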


Here \(A_{\bm i_{\{s\}}}\) is a vector obtained by fixing the other two coordinates, while \(A_{\bm i_{\{s,u\}}}\) is a matrix obtained by fixing the remaining coordinate. The two-block norm \(\norm A_{\{\{s\},[3]\setminus\{s\}\}}\)
is the operator norm of the matrix with mode \(s\) indexing rows
and the other two modes indexing columns.
Here \(V_1\) measures fibers, \(V_2,W_2\) measure matrix slices, and
\(V_3,W_3,I_3\) measure the full tensor.

\bigskip
\noindent\textbf{Proof overview.}
The proof of \cref{thm:main} compares the original chaos with a
larger one whose moments can be estimated using known results.  We build this comparison chaos
from sums of independent sparse exponential random variables with
random signs.  Expanding the sums repeats each tensor coefficient
without changing its value; we call this construction \emph{replication}.

For a simple example, start with the linear form \(aX_1+bX_2\).
Suppose \(X_1,X_2\) are independent and centered, with
\(|X_j|\leq1\) and \(\E X_j^2\leq p_j\in(0,1]\).
Define the reference variables by
\[
 Y_i=\delta_i\varepsilon_i E_i,\qquad i=1,\ldots,4,
\]
where \(\delta_1,\delta_2\sim\Ber(p_1)\) and
\(\delta_3,\delta_4\sim\Ber(p_2)\) are Bernoulli random variables,
the \(\varepsilon_i\) take the values \(\pm1\) with equal probability,
and \(E_i\sim\operatorname{Exp}(1)\) have mean one.
All these factors are mutually independent.  Thus each \(Y_i\) is
zero or a randomly signed exponential random variable.
We compare \(aX_1+bX_2\) with
\[
 a(Y_1+Y_2)+b(Y_3+Y_4).
\]
The coefficient vector changes from \((a,b)\) to \((a,a,b,b)\).
Its largest absolute entry is unchanged, whereas its Euclidean norm
grows by a factor of \(\sqrt2\). 

Why can this help?  More generally, for an integer \(N\geq1\),
take \(N\) copies of \(Y_1\) for the first input and \(N\) copies
of \(Y_3\) for the second, with all copies mutually independent.
Write \(S_1,S_2\) for these two sums.  The estimate follows in two
steps.  First, for an even integer \(r\geq2\), the moment comparison
in \cref{lem:replicated-moment-comparison} gives
\[
 \norm{aX_1+bX_2}_{L_r}
 \leq\frac{2\sqrt e}{1+\log N}\,
       \norm{aS_1+bS_2}_{L_r}.
\]
Second, \(aS_1+bS_2\) is a sum of independent random variables with
variance \(2N(a^2p_1+b^2p_2)\).  The standard Bernstein moment
estimate, also the degree-one case of
\cref{lem:sparse-exponential-chaos}, gives
\[
 \norm{aS_1+bS_2}_{L_r}
 \leq C\left[\sqrt{Nr(a^2p_1+b^2p_2)}
                 +r\max\{|a|,|b|\}\right].
\]
Combining the two bounds and rounding \(r\) up to the next even
integer gives, for every \(r\geq2\),
\[
 \norm{aX_1+bX_2}_{L_r}
 \leq C\,
 \frac{\sqrt{Nr(a^2p_1+b^2p_2)}
       +r\max\{|a|,|b|\}}{1+\log N}.
\]
For \((a,b)\neq(0,0)\), the choice
$N=\left\lceil
 \frac{r\max\{a^2,b^2\}}{a^2p_1+b^2p_2}
 \right\rceil$
balances the two numerator terms up to a universal constant and gives
\[
 \norm{aX_1+bX_2}_{L_r}
 \leq
 \frac{C r\max\{|a|,|b|\}}
 {1+\log\!\left(
     \dfrac{r\max\{a^2,b^2\}}{a^2p_1+b^2p_2}
    \right)},\qquad r\geq2.
\]
Since the argument of the logarithm is at least one, this makes the logarithmic improvement of the largest-entry term
explicit.  For an order-\(d\) chaos, using \(N\)
copies in each mode gives the divisor \((1+\log N)^d\), with the
cost of repetition measured by the weighted slice and partition norms; the details are
given in \cref{sec:proof-main-results}.

\section{Sparse Khatri--Rao embeddings}
\label{sec:applications}
\label{sec:app-khatri-rao}

We apply our concentration inequalities to sparse Khatri--Rao
sketches.  Sparsity reduces the cost per measurement for rank-one
inputs, but may require more measurements.  Our input-dependent
bounds identify when this tradeoff lowers the total computational
cost at fixed accuracy and success probability.

\subsection{Construction and computational cost}

Throughout this section, take \(n_s=n\) in every mode and write
\(N=n^d\).  We identify \(\mathbb R^N\) with
\(\mathbb R^{n\times\cdots\times n}\), so vector Euclidean norms
are tensor Frobenius norms.  Our goal is a random map
\(\Phi\in\mathbb R^{M\times N}\) that approximately preserves 
the norm using \(M\ll N\) measurements.  For a fixed pair of
tensors \(x,y\), linearity gives \(\Phi x-\Phi y=\Phi(x-y)\).
Applying a relative-error guarantee for squared norms to \(x-y\) therefore
preserves their squared distance, with the same success probability:
\[
 (1-\varepsilon)\norm{x-y}_{\mathrm F}^2
 \leq\norm{\Phi x-\Phi y}_2^2
 \leq(1+\varepsilon)\norm{x-y}_{\mathrm F}^2.
\]

A \textit{Khatri--Rao sketch} represents each row as a tensor
product of short vectors.  This representation avoids forming a
length-\(n^d\) row and, for a rank-one input, reduces each
measurement to a product of \(d\) short inner products.
The same idea is used in the tensor random projections of
Sun, Guo, Tropp, and Udell~\cite{SunGuoTroppUdell2021} and the
tensor-structured sketches of Chen and Jin~\cite{ChenJin2021}.
We make the factors sparse to further reduce storage and arithmetic
work.  Fix a common sampling probability
\(p\in(0,1]\) and a number of sketch rows \(M\geq1\).
For \(s\in[d]\), \(m\in[M]\), and \(i\in[n]\), let the variables
\(\xi^{(s)}_{mi}\) be mutually independent, with
\[
 \Pp\{\xi^{(s)}_{mi}=1\}
 =\Pp\{\xi^{(s)}_{mi}=-1\}
 =\frac p2,
 \qquad
 \Pp\{\xi^{(s)}_{mi}=0\}=1-p.
\]
Write \(\xi_m^{(s)}=(\xi^{(s)}_{mi})_{i=1}^{n}\in\mathbb R^n\).
The sketching matrix \(\Phi\in\mathbb R^{M\times N}\) has rows
\[
 \Phi_{m,:}
 =\frac{1}{\sqrt{Mp^d}}
   \bigl(\xi_m^{(1)}\otimes\cdots\otimes\xi_m^{(d)}\bigr)^{\mathsf T},
 \qquad m\in[M].
\]

On an order-\(d\) tensor \(x=(x_{\bm i})\), this map acts by
\begin{equation}\label{eq:khatri-rao-map}
 (\Phi x)_m
 =
 \frac{1}{\sqrt{Mp^d}}
 \sum_{\bm i\in[n]^d}x_{\bm i}
 \prod_{s=1}^d\xi^{(s)}_{m i_s},
 \qquad m\in[M].
\end{equation}

We represent \(\Phi\) through its \(Md\) many length-\(n\) factor vectors,
without forming the full sketching matrix.  These vectors have
\(Mdn\) coordinates in total, of which \(Mdnp\) are nonzero on average.

For a rank-one input \(x=u^{(1)}\otimes\cdots\otimes u^{(d)}\),
we can apply the map without forming its full rows, since
\[
 (\Phi x)_m
 =\frac{1}{\sqrt{Mp^d}}
   \prod_{s=1}^d\left(\sum_{i=1}^{n}
          \xi^{(s)}_{mi}u_i^{(s)}\right).
\]
Each inner product uses only sampled coordinates, giving the
expected computational cost
\begin{equation}\label{eq:khatri-rank-one-work}
 O\!\left(Md(np+1)\right)
 \quad\text{flops}.
\end{equation}
For \(np\geq1\), sparsity saves a factor of order \(p\) per row;
the next result controls how many rows are needed.

\subsection{Input geometry and embedding guarantees}

The normalization in
\eqref{eq:khatri-rao-map} gives
\begin{equation}\label{eq:khatri-isotropy}
 \E\norm{\Phi x}_2^2=\norm{x}_{\mathrm F}^2.
\end{equation}
The correct mean alone does not ensure concentration: a large
entry of \(x\) may be sampled rarely and then make a large
contribution.  Our bound accounts for this through the
\emph{input geometry}: the fractions of the squared Frobenius
norm carried by one entry, one slice, or one singular direction
of a matrix flattening of a slice.  These are properties of the fixed input,
unchanged by rescaling; they contain more information than its
number of nonzero entries.

We use the largest-entry, slice Frobenius, and flattening operator
norms to measure these features.  Recall that
\(x_{\bm i_U}\) is the slice with the modes in
\(U\) free and the other coordinates fixed.  For a nonempty proper
subset \(V\subset U\), its two-block norm
\(\norm{x_{\bm i_U}}_{\{V,U\setminus V\}}\) is the operator norm
of the matrix with modes in \(V\) indexing rows and those in
\(U\setminus V\) indexing columns.

For fixed tensor \(x\neq0\) and failure
probability \(0<\delta<1\), set
\[
q:=2\log(e^2/\delta).
\]
Then \(q \asymp \log(2/\delta)\), and $\norm{\cdot}_{L_{q}}$ will be the highest moment norm for each row of the sketching matrix that we need to control.
Define the entry scale
\(a_0\) and the \(k\)-mode slice scales \(a_k\), \(1\leq k\leq d\), by
\begin{align*}
 a_0
 &=\frac{q^{2d-1}\norm{x}_\infty^2}{p^d},\\
 a_k
 &=\left(\frac{q^2}{p}\right)^{d-k}
   \max_{\substack{U\subseteq[d]\\\abs U=k}}
   \left\{\begin{aligned}
     &\max_{\bm i_{U^c}}\norm{x_{\bm i_U}}_{\mathrm F}^2
     +q^{k-1}
       \max_{\substack{\bm i_{U^c}\\\varnothing\neq V\subsetneq U}}
       \norm{x_{\bm i_U}}_{\{V,U\setminus V\}}^2
   \end{aligned}\right\}.
\end{align*}
The second term is zero for \(k=1\); no coordinates are fixed when
\(k=d\). 
Suppressing the dependence of \(a_k\) on \(x,\delta,p\), set
\begin{equation}\label{eq:khatri-standard-norm-factor}
 \begin{aligned}
 \rho_x(\delta)
 &=1\vee\min_{1\leq k\leq d}
       \left(\frac{a_0}{a_k}\right)^{1/k},\\
 G_x(\delta)
 &=\frac{1}{\norm{x}_{\mathrm F}^2}
 \max\left\{
  a_1,\ldots,a_d,\,
  \frac{a_0}{[1+\log\rho_x(\delta)]^{2d}}
 \right\}.
 \end{aligned}
\end{equation}
The dependence of \(G_x(\delta)\) and \(\rho_x(\delta)\) on the
sampling probability \(p\) is suppressed in the notation.
The logarithmic divisor reduces the largest-entry contribution;
the remaining terms measure concentration of energy in slices and
singular directions.

The central quantity is \(G_x(\delta)\): it summarizes the input
geometry at sampling probability \(p\) and failure probability
\(\delta\).  Smaller \(G_x(\delta)\) means fewer measurements suffice.

For \(d=2\), write the input as a matrix \(X\).
The scales then involve only familiar matrix norms:
\[
 \begin{aligned}
 a_0&=\frac{q^3\norm X_\infty^2}{p^2},\\
 a_1&=\frac{q^2}{p}\max\left\{
   \norm X_{2,\infty}^2,
   \norm{X^{\mathsf T}}_{2,\infty}^2\right\},\\
 a_2&=\norm X_{\mathrm F}^2
            +q\norm X_{\mathrm{op}}^2.
 \end{aligned}
\]
Here the entry and row/column norms measure how much energy can
lie in one entry, row, or column.  The ratio
\(\norm X_{\mathrm{op}}^2/\norm X_{\mathrm F}^2\) is the fraction
of the total energy carried by the largest singular value.

The following proposition gives the guarantee for one tensor and
for all tensors in a fixed subspace simultaneously.
\begin{proposition}[Sparse Khatri--Rao embedding]
\label{prop:sparse-khatri-rao}
Let \(0<\delta<1\).  For every nonzero tensor \(x\),
\begin{equation}\label{eq:khatri-fixed-vector}
 \Pp\left(
 \left|\frac{\norm{\Phi x}_2^2}{\norm{x}_{\mathrm F}^2}-1\right|
 >
 C_dG_x(\delta)
 \left\{\sqrt{\frac{\log(2/\delta)}M}
             +\frac{\log(2/\delta)}M\right\}
 \right)
 \leq\delta.
\end{equation}
Consequently, for \(0<\varepsilon\leq1\),
\begin{equation}\label{eq:khatri-sample-size}
 M\geq C_dG_x(\delta)^2\varepsilon^{-2}\log\frac2\delta
\end{equation}
implies
\[
 \Pp\left(
  \left|\frac{\norm{\Phi x}_2^2}{\norm{x}_{\mathrm F}^2}-1\right|
  >\varepsilon
 \right)
 \leq\delta.
\]
Let \(W\) be a fixed \(k\)-dimensional linear subspace of
\(\mathbb R^{n\times\cdots\times n}\), and suppose that
\[
 \sup_{x\in W\setminus\{0\}}
       G_x\!\left(\frac{\delta}{9^k}\right)\leq G.
\]
Then, for \(0<\varepsilon\leq1\),
 $M\geq C_dG^2\varepsilon^{-2}
          \left(k+\log\frac2\delta\right)$
implies, with probability at least \(1-\delta\),
\[
 (1-\varepsilon)\norm{x}_{\mathrm F}^2
 \leq\norm{\Phi x}_2^2
 \leq(1+\varepsilon)\norm{x}_{\mathrm F}^2,
 \qquad \forall x\in W.
\]
\end{proposition}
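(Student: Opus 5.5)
The plan is to write \(\norm{\Phi x}_2^2-\norm x_{\mathrm F}^2=\frac1M\sum_{m=1}^M(W_m-\norm x_{\mathrm F}^2)\), where
\[
W_m=p^{-d}\Bigl(\sum_{\bm i}x_{\bm i}\prod_{s}\xi^{(s)}_{mi_s}\Bigr)^2 .
\]
The \(W_m\) are i.i.d., nonnegative, and have mean \(\norm x_{\mathrm F}^2\) by \eqref{eq:khatri-isotropy}. The argument has three steps: control a single row's moments up to order \(q\) with \cref{thm:main}, convert that into a sub-exponential-type moment bound on the centered summands, and then apply a Bernstein/Latała-type moment inequality for the average of \(M\) i.i.d.\ terms at moment order \(\asymp\log(2/\delta)\), finishing with Markov's inequality.

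\textbf{Single-row moment bound.} Apply \cref{thm:main} to the chaos \(Z=\sum_{\bm i}x_{\bm i}\prod_s\xi^{(s)}_{mi_s}\) with \(p_i^{(s)}=p\). The inputs \(\xi\) are centered, bounded by one, and have variance \(p\). For \(r\leq 2q\), this gives
\[
\norm{Z}_{L_r}\lesssim_d\max_k B_k(x;r)+\frac{r^d\norm x_\infty}{\ell(\lambda_x(r))^d}.
\]
Squaring and dividing by \(p^d\), I then check that
\[
p^{-d}\norm{Z}_{L_{2q}}^2\lesssim_d q\cdot \norm x_{\mathrm F}^2\,G_x(\delta).
\]
The match works scale by scale:
\begin{itemize}
\item The entry term \(q^{2d}\norm x_\infty^2/p^d\) equals \(q\,a_0\).
\item A slice with \(U=I^c\), \(\abs U=k\), and partition \(\cJ\) contributes
\[
q^{2(d-k)+\abs{\cJ}}p^{k-d}\max\norm{x_{\bm i_U}}_{\cJ}^2 .
\]
Two partitions are tight. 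The one-block partition gives \(q^{2(d-k)+1}p^{k-d}\norm{\cdot}_{\mathrm F}^2\), matching the first term of \(q\,a_k\). The two-block partitions give \(q^{2(d-k)+2}p^{k-d}\norm{\cdot}^2_{\{V,U\setminus V\}}\).
\item For partitions with more blocks, I use that refining a partition only decreases the norm, so \(\norm{\cdot}_{\cJ}\leq\norm{\cdot}_{\{V,U\setminus V\}}\) for any two-block coarsening. The power \(q^{\abs{\cJ}}\leq q^{k}\) is absorbed by the factor \(q^{k-1}\) in \(a_k\), giving \(q\cdot q^{k-1}\).
\item Finally, I compare the logarithmic factors. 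From the definitions, \((a_0/a_k)^{1/k}\) dominates \(\lambda_x(q)\) up to constants and powers: \(r^{2d}m^2/B_k^2\) is comparable to \(q a_0/(q a_k)\), so \(\lambda^{k}\gtrsim(\cdot)\). Hence \(\ell(\lambda_x(q))^{2d}\gtrsim_d[1+\log\rho_x(\delta)]^{2d}\).
\end{itemize}
The same bound holds at every \(r\leq 2q\) after normalizing by the appropriate power of \(r\). I expect this bookkeeping of exponents, including the monotonicity in \(r\) of the log-corrected term, to be the main obstacle.

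\textbf{From one row to the average.} The previous step gives \(\norm{W_m}_{L_r}\lesssim_d r\,G\norm x_{\mathrm F}^2\) for \(2\leq r\leq q\). Here I take \(G\) to be the \(q\)-dependent quantity \(G_x(\delta)\), and use that for \(r\leq q\) the row-level bound only improves, because each scale is nondecreasing in \(q\).

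By Latała's moment bound for i.i.d.\ centered sums, or a Rosenthal-type estimate, at order \(r=q\):
\[
\Bigl\|\frac1M\sum_m(W_m-\E W_m)\Bigr\|_{L_q}\lesssim \sqrt{\frac qM}\,\norm{W}_{L_2}+\frac qM\cdot\frac{\norm W_{L_q}}{q}\cdot(\text{const}).
\]
This needs care, because \(\norm W_{L_q}\lesssim qG\) would give an extra factor \(q\). To avoid it, I will use the sharper form of Rosenthal/Latała with \(M^{1/q}\) factors, or simply Rosenthal:
\[
\norm{\textstyle\sum(W_m-\E W_m)}_{L_q}\lesssim \sqrt{qM}\norm W_{L_2}+q\,M^{1/q}\norm W_{L_q}.
\]

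\textbf{Conclusion for one tensor.} Markov's inequality at order \(q\) gives failure probability \(e^{-q/2}\leq\delta\). To obtain exactly the form \(G\{\sqrt{\log(2/\delta)/M}+\log(2/\delta)/M\}\), I would absorb the \(q\)-powers using the fact that \(G_x(\delta)\) was defined with \(q^{2d-1}\) rather than \(q^{2d}\) in \(a_0\) (and similarly in \(a_k\)). This normalization by one \(q\) appears designed precisely so that \(\norm{W}_{L_q}\lesssim q\,G\norm x_{\mathrm F}^2\) translates into the stated bound. I would verify this exponent accounting first. The sample-size statement \eqref{eq:khatri-sample-size} then follows because \(\varepsilon\leq1\) makes the square-root term dominate.

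\textbf{Subspace statement.} Take a \(1/4\)-net \(\mathcal N\) of the unit sphere of \(W\) with \(\abs{\mathcal N}\leq9^k\). Apply the single-tensor bound to each net point with failure probability \(\delta/9^k\), using the hypothesis \(G_x(\delta/9^k)\leq G\), and take a union bound; since \(\log(2\cdot9^k/\delta)\asymp k+\log(2/\delta)\), this gives the stated \(M\). Then extend from the net to all of \(W\) by the standard argument for the symmetric quadratic form \(\Phi^{\mathsf T}\Phi-I\) restricted to \(W\): the operator norm is at most \(\bigl(1-2\cdot\tfrac14-\tfrac1{16}\bigr)^{-1}\) times the maximum over the net. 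This requires the bound for each net point \(y\) only, not for differences of net points, because the quadratic-form version of the net argument needs only the diagonal values \(\abs{\langle(\Phi^{\mathsf T}\Phi-I)y,y\rangle}\), \(y\in\mathcal N\).
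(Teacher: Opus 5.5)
Your single-row step and the net argument are sound. The $r$-uniformity you flagged does close: each candidate in $\lambda_x(r)$ scales like $r^{2-|\cJ|/k}$, so $\lambda_x(r)\geq 1\vee(r/q)^2\lambda_x(q)$, and $(1+2\log(q/r))^{2d}\lesssim_d (q/r)^{2d-1}$. It is simpler, as the paper does, to apply \cref{prop:replication-profile} with the single level $N=\rho_x(\delta)$ for all $2\leq r\leq q$ at once.

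The genuine gap is the passage from one row to the average. Rosenthal's inequality uses only $\norm{W}_{L_2}$ and $\norm{W}_{L_q}$. With $\norm{W}_{L_q}\lesssim_d qG_x(\delta)\norm{x}_{\mathrm F}^2$, its second term, after dividing by $M$, is of order $G_x(\delta)\,q^2M^{1/q}/M$, not $G_x(\delta)\,q/M$. Your remedy, absorbing the extra $q$ into the $q^{2d-1}$ normalization of $a_0,a_k$, cannot work. That normalization is exactly what produced the linear growth $\norm{W}_{L_s}\lesssim_d sG_x(\delta)\norm{x}_{\mathrm F}^2$ in the first place, and it cannot be spent twice. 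The loss is real. With $G_x(\delta)=O(1)$, fixed $\varepsilon$ and $\delta\to0$, your estimate only certifies $\varepsilon$-accuracy once $M\gtrsim q^2/\varepsilon$, rather than $M\gtrsim\varepsilon^{-2}q$. So neither \eqref{eq:khatri-fixed-vector} nor \eqref{eq:khatri-sample-size} follows.

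The missing idea is to use the whole moment profile, not just its endpoints; you already have it for every $2\leq s\leq R\asymp q$. For i.i.d.\ centered $Y_m$, Lata{\l}a's estimate (symmetric case plus symmetrization) gives
\[
 \Bigl\|\sum_{m=1}^M Y_m\Bigr\|_{L_R}
 \lesssim\sup_{\max\{2,R/M\}\leq s\leq R}\frac Rs\Bigl(\frac MR\Bigr)^{1/s}\norm{Y_1}_{L_s}.
\]
Under $\norm{Y_1}_{L_s}\leq Ks$, the supremum is at most $KR\max\{1,\sqrt{M/R}\}$. This is because $(M/R)^{1/s}$ is largest at $s=2$ when $M\geq R$, and is at most one otherwise. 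Dividing by $M$ yields the Bernstein-type bound $K(\sqrt{R/M}+R/M)$ with no extra factor of $q$. This is the step the paper uses, followed by Markov's inequality at order $R=\log(e^2/\delta)$.
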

For the subspace guarantee, the argument \(\delta/9^k\) accounts
for a union bound over a \(1/4\)-net of the unit sphere of \(W\),
with at most \(9^k\) points.

\subsection{Inputs that tolerate sparsification}

Proposition~\ref{prop:sparse-khatri-rao} gives a smaller sufficient
row count when \(G_x(\delta)\) is smaller.
Recall that $q=2\log(e^2/\delta)$ is the highest order moment we consider.
The first two examples keep \(G_x(\delta)\) bounded by spreading
energy across coordinates and singular directions.  The final,
order-three example shows when the logarithmic correction
improves the sufficient row count.

\noindent\textbf{Spread-out matrices.}
For a nonzero matrix \(X\), it is enough to have
\[
 \begin{aligned}
 \frac{\norm X_\infty^2}{\norm X_{\mathrm F}^2}
 \leq\frac{p^2}{q^3},\qquad
 \frac{\max\{\norm X_{2,\infty}^2,
                   \norm{X^{\mathsf T}}_{2,\infty}^2\}}
      {\norm X_{\mathrm F}^2}
 \leq\frac{p}{q^2},\qquad
 \frac{\norm X_{\mathrm{op}}^2}{\norm X_{\mathrm F}^2}
 \leq\frac1q.
 \end{aligned}
\]
These conditions limit the energy in each entry, row or column,
and singular direction.  They give
\(a_0,a_1\leq\norm X_{\mathrm F}^2\) and
\(a_2\leq2\norm X_{\mathrm F}^2\), so
\[
 G_X(\delta)\leq2,\qquad
 M\gtrsim\varepsilon^{-2}\log(2/\delta)
 \quad\text{is sufficient}.
\]
The entrywise condition is sufficient but not necessary: the logarithmic
correction can accommodate larger entries.

A concrete example is \(X=H/n\), where \(n\) is a power of two
and \(H\in\{-1,1\}^{n\times n}\) is a Sylvester Hadamard matrix,
so \(HH^{\mathsf T}=nI\).  Then
\[
 \norm X_{\mathrm F}^2=1,\qquad
 \norm X_\infty^2=\frac1{n^2},\qquad
 \norm X_{2,\infty}^2=\norm{X^{\mathsf T}}_{2,\infty}^2
 =\norm X_{\mathrm{op}}^2=\frac1n.
\]
All three conditions hold whenever \(np\geq q^2\).
At fixed \(\delta\), we may therefore take \(p=q^2/n\to0\)
without increasing the sufficient row-count scale.

\medskip
\noindent\textbf{Diagonal tensors and subspaces.}
For an example valid for every \(d\geq2\), let
\(x_{j,\ldots,j}=h^{-1/2}\), \(1\leq j\leq h\), and all other
entries be zero, where \(h\leq n\).
Then \(\norm{x}_{\mathrm F}=1\), every proper slice has at most
one nonzero entry, and every nontrivial matrix flattening has
operator norm \(h^{-1/2}\).  Substitution gives
\[
 hp^d\geq q^{2d-1}
 \quad\Longrightarrow\quad
 \rho_x(\delta)=1,\qquad G_x(\delta)=1+\frac{q^{d-1}}h<2.
\]
Here \(hp^d\) is the expected number of sampled diagonal entries
per row.  Under this condition, the sufficient row count remains
\(M=O_d(\varepsilon^{-2}\log(2/\delta))\), even if \(p\to0\).

For \(n\geq kh\), let \(e_i\) denote the \(i\)-th standard basis
vector in \(\mathbb R^n\), and define
\[
 x^{(\ell)}=\frac1{\sqrt h}\sum_{j=1}^h
      e_{(\ell-1)h+j}^{\otimes d},\qquad \ell\in[k].
\]
The tensor \(x^{(\ell)}\) uses only the coordinates
\((\ell-1)h+1,\ldots,\ell h\) in every mode.  These tensors are
orthonormal, so \(W=\operatorname{span}\{x^{(1)},\ldots,x^{(k)}\}\)
has dimension \(k\).  Suppose that
\[
 hp^d\geq(q+2k\log9)^{2d-1}.
\]
Every unit tensor \(x=\sum_{\ell=1}^k c_\ell x^{(\ell)}\in W\)
satisfies \(\sum_{\ell=1}^k c_\ell^2=1\), hence \(\abs{c_\ell}\leq1\).
Its entries, proper slices, and nontrivial matrix flattenings therefore
satisfy the same norm bounds as above, giving \(G_x(\delta/9^k)<2\).
Proposition~\ref{prop:sparse-khatri-rao}
gives a subspace embedding with probability at least \(1-\delta\) and
\[
 M=O_d\!\left(\varepsilon^{-2}[k+\log(2/\delta)]\right).
\]

\medskip
\noindent\textbf{Overlapping-window tensors: a logarithmic gain.}
The preceding examples have \(\rho_x(\delta)=1\).
For a genuinely coupled order-three example, take
$K=\lceil q^4\rceil$ and any integer $b\ge2$.
Set $n=Kb$, $p=1/b$, and let $I_i=\{i,\ldots,i+b-1\}$,
with indices interpreted cyclically modulo \(n\).  Define
\begin{equation*}
 x_{ijk}=\frac{\ind_{\{j,k\in I_i\}}}{b\sqrt n},
 \qquad \norm{x}_{\mathrm F}=1.
\end{equation*}
Each fixed first coordinate supports a \(b\times b\) window;
overlapping windows couple the three modes.
The maximum squared fiber norm is \(1/(bn)\), and the maximum
squared slice Frobenius and operator norms are both \(1/n\).
To compute the full-flattening norms, first put the first mode in
the rows.  The Gram matrix has entries
\(\abs{I_i\cap I_{i'}}^2/(b^2n)\) and constant row sum
\[
 \frac{b^2+2\sum_{t=1}^{b-1}t^2}{b^2n}
 =\frac{2b^2+1}{3bn}.
\]
Putting either of the other modes in the rows gives a Gram matrix
with constant row sum \(b/n\).  These Gram matrices are entrywise
nonnegative, so their operator norms equal their row sums.
The maximum squared operator norm of a full flattening is therefore
\(b/n=1/K\).  Hence
\begin{equation*}
 (a_0,a_1,a_2,a_3)
 =\frac1K\bigl(q^5,q^4,q^2(1+q),K+q^2\bigr).
\end{equation*}
In particular, no full flattening concentrates more than a
fraction \(1/K\) of the energy in one singular direction.
The scales above give
\[
 \rho_x(\delta)=\left(\frac{q^5}{K+q^2}\right)^{1/3}
 \asymp q^{1/3},\qquad
 G_x(\delta)\asymp\max\{1,q/\ell(q)^6\}.
\]
Proposition~\ref{prop:sparse-khatri-rao} therefore guarantees relative
squared-norm error at most $\varepsilon$ with probability at least
$1-\delta$ whenever
\begin{equation}
 M\ge C\varepsilon^{-2}\max\{q,q^3/\ell(q)^{12}\}.
 \label{kr:window-rows}
\end{equation}
Without the logarithmic correction, the entry scale would give
\(G_x(\delta)\asymp q\) and the sufficient row bound
\(M\ge C\varepsilon^{-2}q^3\).  Thus the correction improves
this row bound by a factor of order \(\ell(q)^{12}\) as
\(q\to\infty\), or equivalently as \(\delta\downarrow0\).
At fixed \(\delta\), increasing \(b\) makes \(p=1/b\) smaller
without changing \(\rho_x(\delta)\) or \(G_x(\delta)\);
the logarithmic gain grows with the confidence requirement,
not with this decrease in \(p\).

\subsection{Comparison of computational costs}

Fix an order \(d\geq2\), a mode dimension \(n\geq2\), a deterministic
nonzero rank-one input \(x=u^{(1)}\otimes\cdots\otimes u^{(d)}\), and
\(0<\varepsilon\leq1\), \(0<\delta<1\).
Throughout this comparison, assume \(np\geq1\).
The common target is
\begin{equation}\label{eq:khatri-comparison-guarantee}
 \Pp\left(
   \left|\frac{\norm{\Phi x}_2^2}{\norm{x}_{\mathrm F}^2}-1\right|
       \leq\varepsilon
 \right)\geq1-\delta.
\end{equation}
We compare sufficient computational cost bounds for this fixed
input.  In the cost
model above, each sparse row uses \(O(dnp)\) expected flops.
Combining this with \eqref{eq:khatri-sample-size}
gives our sufficient cost
\(O_d(npG_x(\delta)^2\varepsilon^{-2}\log(2/\delta))\).

\medskip
\noindent\textbf{Chen--Jin \cite{ChenJin2021}: the same sparse construction.}
For this comparison, take \(d=2\) and write \(x=u\otimes v\).
The proof of Chen--Jin's Theorem~2.1~\cite{ChenJin2021},
applied to the singleton \(\{x\}\), gives
\eqref{eq:khatri-comparison-guarantee} when
\begin{equation}\label{eq:chen-jin-sample-size}
 M\gtrsim
 \max\left\{\frac{\log(2/\delta)}{\varepsilon^2p^4},
             \frac{\log^2(2/\delta)}{\varepsilon p^2}\right\}.
\end{equation}
Multiplying the two row-count bounds by the per-row cost gives,
up to universal constants,
\begin{equation}\label{eq:khatri-two-factor-work}
 \begin{aligned}
 \text{Our bound at }p:\quad
 &\frac{npG_x(\delta)^2}{\varepsilon^2}\log\frac2\delta,\\
 \text{Chen--Jin at the same }p:\quad
 &n\max\left\{
        \frac{\log(2/\delta)}{\varepsilon^2p^3},
        \frac{\log^2(2/\delta)}{\varepsilon p}\right\}.
 \end{aligned}
\end{equation}
Both terms in their sparse budget increase as \(p\) decreases,
so that bound does not certify a benefit from sparsification.
In contrast, at fixed \(\varepsilon\) and \(\delta\), our sufficient
flop bound is asymptotically smaller than the dense benchmark whenever
\(pG_x(\delta)^2=o(1)\), for example when \(G_x(\delta)=O(1)\)
and \(p=o(1)\).

\medskip
\noindent\textbf{Dense fixed-input benchmark.}
For the fixed-input problem, we use the bound from earlier work
recalled in~\cite[equation~(1)]{BerettaMusco2026};
see also~\cite[Theorem~2]{AhleKnudsen2019} for dense Rademacher
factors.  Multiplying its row count by the \(O(dn)\) cost per
dense row gives the sufficient computational cost
\[
 O_d\!\left(n\left[\varepsilon^{-2}\log\frac2\delta
                  +\varepsilon^{-1}\log^d\frac2\delta\right]\right).
\]
For \(d=2\), this agrees, up to constants, with the dense
specialization \(p=1\) in \cite{ChenJin2021}.
Dividing our sufficient cost scale by the dense benchmark gives,
up to constants depending only on \(d\),
\[
 \frac{pG_x(\delta)^2}{1+\varepsilon\log^{d-1}(2/\delta)}.
\]
Consequently, as \(n\to\infty\), the condition
\begin{equation}\label{eq:khatri-flop-saving}
 pG_x(\delta)^2
 =o\!\left(1+\varepsilon\log^{d-1}(2/\delta)\right)
\end{equation}
certifies an asymptotically smaller budget than the dense
benchmark above.

\section{Proofs of the main results}
\label{sec:proof-main-results}

\subsection{The replication estimate}
\label{sec:technical-replication}

Recall the weighted norms \(M_{I,\cJ}(A)\), the moment scales
\(\Delta_I(A;r)\) from \eqref{eq:Delta}, and
\(\ell(N)=1+\log N\).

\begin{proposition}[Replication bound]
\label{prop:replication-profile}
Let \(A\) be a real \(d\)-tensor and let the mutually independent
inputs satisfy \eqref{eq:assump-X}.  For every \(r\geq2\) and
\(N\geq1\),
\begin{equation}\label{eq:fixed-replication-real-moment}
 \norm{Z_A(X)}_{L_r}
 \lesssim_d
 \frac{\displaystyle\max_{I\subseteq[d]}
          N^{|I^c|/2}\Delta_I(A;r)}
      {\ell(N)^d}.
\end{equation}
Consequently, for every \(t>0\),
\begin{equation}\label{eq:replication-mixed-tail}
 \Pp(\abs{Z_A(X)}\geq t)
 \leq2\exp\left[
 -c_d\min_{\substack{I\subseteq[d]\\\cJ\in\Pi(I^c)}}
 \left(\frac{t\ell(N)^d}
 {N^{|I^c|/2}M_{I,\cJ}(A)}\right)^{1/\kappa_{I,\cJ}}
 \right],
 \qquad \kappa_{I,\cJ}=\abs I+\abs{\cJ}/2.
\end{equation}
\end{proposition}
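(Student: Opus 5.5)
We follow the replication scheme from the proof overview: compare $Z_A(X)$ with a replicated sparse-exponential chaos, bound the latter with \cref{lem:sparse-exponential-chaos}, and read off the tail by Markov's inequality.

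\textbf{Step 1: reduce to even $r$.} It suffices to treat even integers $r\geq2$. For general $r\geq2$, take the next even integer $r'\le r+2\le 2r$ and use $\norm{\cdot}_{L_r}\leq\norm{\cdot}_{L_{r'}}$. Each $\Delta_I(A;r')$ is at most $C_d\Delta_I(A;r)$, because every power of $r$ in \eqref{eq:Delta} has exponent at most $d$.

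\textbf{Step 2: build the reference chaos.} For each $s\in[d]$ and $i\in[n_s]$, introduce $N$ independent copies
\[
 Y^{(s)}_{i,j}=\delta^{(s)}_{i,j}\varepsilon^{(s)}_{i,j}E^{(s)}_{i,j},\qquad j\in[N],
\]
with $\delta^{(s)}_{i,j}\sim\Ber(p^{(s)}_i)$, independent signs $\varepsilon^{(s)}_{i,j}$, and $E^{(s)}_{i,j}\sim\operatorname{Exp}(1)$. Set $S^{(s)}_i=\sum_{j=1}^N Y^{(s)}_{i,j}$.

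\textbf{Step 3: moment comparison.} Apply \cref{lem:replicated-moment-comparison} in each mode to get
\[
 \norm{Z_A(X)}_{L_r}\leq\Bigl(\frac{2\sqrt e}{\ell(N)}\Bigr)^d\norm{Z_A(S)}_{L_r}.
\]
If that lemma is only stated for one mode at a time, we iterate it. Condition on all inputs except those of mode $s$. The chaos is then a linear form $\sum_i b_iX^{(s)}_i$ with frozen coefficients, and the one-mode comparison applies to it. Integrating out the conditioning and repeating over $s=1,\dots,d$ yields the factor $\ell(N)^{-d}$. This works because the comparison is a statement about even moments, and those multiply correctly under conditioning. Expanding even moments, the comparison reduces to the termwise inequality
\[
 \E (X_i^{(s)})^{k}\le p_i^{(s)}\le\Bigl(\tfrac{2\sqrt e}{\ell(N)}\Bigr)^k\E (S_i^{(s)})^{k}\qquad\text{for even }k\ge2,
\]
and odd moments of $S$ vanish. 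This termwise check is the main technical point, and we take it from the lemma.

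\textbf{Step 4: moments of the reference chaos.} $Z_A(S)$ is a decoupled chaos in the variables $Y^{(s)}_{i,j}$, with a replicated coefficient tensor $\tilde A$ indexed by $((i_s,j_s))_s$ and entries $\tilde a=a_{\bm i}$. \Cref{lem:sparse-exponential-chaos} bounds $\norm{Z_A(S)}_{L_r}$ by $\max_{I,\cJ}r^{\kappa_{I,\cJ}}\tilde M_{I,\cJ}$. It remains to show
\[
 \tilde M_{I,\cJ}\le N^{|I^c|/2}M_{I,\cJ}(A).
\]
Fixing coordinates in $I$ fixes the $j_s$ for $s\in I$ as well, and these do not matter. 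For the free modes, take unit test vectors $x^{(q)}$ on blocks of pairs $(\bm i_{J_q},\bm j_{J_q})$. Summing over $\bm j$ and applying Cauchy--Schwarz in the $\bm j_{J_q}$ variables turns each $x^{(q)}$ into a vector in $\bm i_{J_q}$ of norm at most $N^{|J_q|/2}$. Multiplying over blocks gives $N^{|I^c|/2}$. Hence
\[
 \norm{Z_A(X)}_{L_r}\lesssim_d \frac{\max_I N^{|I^c|/2}\Delta_I(A;r)}{\ell(N)^d},
\]
which is \eqref{eq:fixed-replication-real-moment}.

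\textbf{Step 5: tail bound.} The moment bound says
\[
 \norm{Z_A(X)}_{L_r}\le C_d\max_{I,\cJ} r^{\kappa_{I,\cJ}}\,\frac{N^{|I^c|/2}M_{I,\cJ}(A)}{\ell(N)^d}.
\]
By the standard moment-to-tail conversion, apply Markov's inequality with $\Pp(|Z|\ge e\norm{Z}_{L_r})\le e^{-r}$. Choose $r$ as the minimum over $(I,\cJ)$ of
\[
 c_d\left(\frac{t\ell(N)^d}{N^{|I^c|/2}M_{I,\cJ}(A)}\right)^{1/\kappa_{I,\cJ}}
\]
when this is at least $2$. When it is smaller than $2$, the bound holds trivially through the prefactor $2$ and the constant $c_d$. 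This gives \eqref{eq:replication-mixed-tail}.
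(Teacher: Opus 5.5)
Your overall route is the paper's: cite the replicated moment comparison, apply the Dai--Wang bound to the replicated chaos (with $K_{\mathrm{exp}}=2$, since $\E e^{E/2}=2$), bound the replicated weighted norms by Cauchy--Schwarz over the replica indices, round $r$ up to an even integer, and convert moments to tails. The genuine omission is non-integer $N$. The statement covers every real $N\geq1$, but building ``$N$ independent copies'' only makes sense for integer $N$, and you never extend the bound. This matters because the proposition is later applied with $N=\lambda_A(r)$ and $N=\eta_A(t)$, which are generally not integers. The fix is the paper's rounding step. With $\bar N=\lceil N\rceil\leq 2N$ one has $\bar N^{|I^c|/2}\leq 2^{d/2}N^{|I^c|/2}$ and $\ell(\bar N)\geq\ell(N)$. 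So the integer-$N$ bound at $\bar N$ gives the bound at $N$ with a constant depending only on $d$, and your tail argument then goes through for real $N$.

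Separately, your explanation of why the comparison lemma holds is wrong, even though citing the lemma itself is legitimate. The inputs are only centered, not symmetric, so the odd moments of $X_i^{(s)}$ need not vanish. For example, $X=\delta-p$ with $\delta\sim\Ber(p)$ has $\E X^3=p(1-p)(1-2p)$. After conditioning, $\E|a+bX|^r$ then contains terms $\binom rm a^{r-m}b^m\E X^m$ with $m$ odd and of indefinite sign. Dominating the even moments of $X$ termwise therefore does not dominate $\E|a+bX|^r$. The paper first symmetrizes: with $V=(X-X')/2$, conditional Jensen gives $\E|Z_A(X)|^r\leq 2^{dr}\E|Z_A(V)|^r$. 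Only then does it compare the symmetric $V_i^{(s)}$ termwise with $\sqrt e\,S_i^{(s)}/\ell(N)$. That symmetrization step is where the factor $2^d$ in $(2\sqrt e)^d$ comes from. Because your Step~3 uses the lemma's conclusion directly, the proof stands, but the reduction you sketch would not by itself prove the lemma.
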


The parameter \(N\) balances two effects.  Replication enlarges a
norm with \(k\) free coordinates by \(N^{k/2}\), but the moment
comparison gains the divisor \(\ell(N)^d\).  In particular, the
largest-entry term, for which \(k=0\), is not enlarged.
We first prove the estimate for integer \(N\), then extend it by
rounding; \cref{sec:explicit-proofs} chooses \(N\) to obtain the
main results.

\subsection{Proof of the replication estimate}
\label{sec:proof-replication}

The argument combines a coordinatewise moment comparison, a known
bound for sparse-exponential chaoses, and an exact calculation of
how replication changes the weighted norms.

\subsubsection{A scalar comparison and its multilinear consequence}
\label{sec:replicated-moment-comparison}

Let \(\varepsilon\) be a Rademacher sign and \(E\sim\operatorname{Exp}(1)\),
independent of each other.  The symmetric exponential variable
\(\zeta=\varepsilon E\) satisfies
\[
 \E\zeta^m=0\quad(m\ \text{odd}),\qquad
 \E\zeta^m=m!\quad(m\ \text{even}),\qquad
 \E\exp(\abs\zeta/2)=2.
\]
For an integer \(N\geq1\), replace each mode-\(s\) coordinate \(i_s\)
by \((i_s,u_s)\in[n_s]\times[N]\), and define
\[
 a^{[N]}_{(i_1,u_1),\ldots,(i_d,u_d)}=a_{i_1,\ldots,i_d},
 \qquad
 Y_{i,u}^{(s)}=\delta_{i,u}^{(s)}\varepsilon_{i,u}^{(s)}E_{i,u}^{(s)},
 \qquad
 \delta_{i,u}^{(s)}\sim\Ber(p_i^{(s)}).
\]
All Bernoulli variables, signs, and exponential variables are
mutually independent.  The replicated tensor \(A^{[N]}\) has
dimensions \(n_1N\times\cdots\times n_dN\); each replica inherits
the original probability \(p_i^{(s)}\).

\begin{lemma}[Replicated moment comparison]
\label{lem:replicated-moment-comparison}
Under the assumptions of \cref{prop:replication-profile}, for every
even integer \(r\geq2\) and integer \(N\geq1\),
\[
 \norm{Z_A(X)}_{L_r}
 \leq\frac{(2\sqrt e)^d}{\ell(N)^d}
       \norm{Z_{A^{[N]}}(Y)}_{L_r}.
\]
\end{lemma}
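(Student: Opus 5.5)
The plan is to compare the two chaoses mode by mode, after symmetrizing, using only nonnegativity of the terms in an even-moment expansion. The point is that \(Z_{A^{[N]}}(Y)\) is exactly the chaos \(Z_A\) evaluated at the inputs
\[
S_i^{(s)}=\sum_{u=1}^N Y^{(s)}_{i,u}.
\]
Indeed, the coefficients of \(A^{[N]}\) do not depend on the replica indices, so summing over \(u_s\) first collapses each replicated coordinate. It therefore suffices to prove
\[
\norm{Z_A(X)}_{L_r}\le \Bigl(\frac{2\sqrt e}{\ell(N)}\Bigr)^d\norm{Z_A(S)}_{L_r},
\]
replacing one mode at a time and paying a factor \(2\sqrt e/\ell(N)\) per mode.

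\emph{Step 1 (symmetrization).} Fix a mode \(s\) and condition on all other modes, which are independent. Then \(Z=\sum_i\beta_iX_i^{(s)}\) is a linear form in the centered vector \(X^{(s)}\), with random coefficients \(\beta_i\) independent of it. The standard symmetrization inequality for sums of independent centered variables gives
\[
\norm{\textstyle\sum_i\beta_iX_i}_{L_r}\le 2\,\norm{\textstyle\sum_i\beta_i\varepsilon_iX_i}_{L_r},
\]
with independent Rademacher signs \(\varepsilon_i\). The variables \(\tilde X_i=\varepsilon_iX_i\) are symmetric, satisfy \(|\tilde X_i|\le1\) and \(\E\tilde X_i^2\le p_i\), and hence \(\E\tilde X_i^{2m}\le p_i\) for all \(m\ge1\).

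\emph{Step 2 (coordinatewise comparison in even moments).} Still conditioning on the other modes, and now also on the mode-\(s\) coordinates other than \(i\), write the chaos as \(\alpha+\beta\tilde X_i\). Since \(r\) is even and \(\tilde X_i\) is symmetric,
\[
\E(\alpha+\beta\tilde X_i)^r=\sum_{m}\binom{r}{2m}\alpha^{r-2m}\beta^{2m}\,\E\tilde X_i^{2m}.
\]
Here \(r-2m\) is even, so every term is nonnegative. The same expansion holds with \(\tilde X_i\) replaced by \(cS_i\), because \(S_i\) is also symmetric. Thus it suffices to prove the scalar inequality
\[
\E\tilde X_i^{2m}\le c^{2m}\,\E S_i^{2m}\quad\text{for all }m\ge1,\qquad c=\frac{\sqrt e}{\ell(N)}.
\]
Once this holds, we replace \(\tilde X_i\) by \(cS_i\) one coordinate at a time (Fubini), then mode by mode. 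The result is \(\norm{Z_A(X)}_{L_r}\le (2c)^d\norm{Z_A(S)}_{L_r}\), with \(2c=2\sqrt e/\ell(N)\), which is the claim.

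\emph{Step 3 (the scalar inequality; the main obstacle).} Expanding \(S_i^{2m}\) by the multinomial theorem, all cross terms have nonnegative expectation, since only even multiplicities survive by symmetry. Keeping only the diagonal terms gives
\[
\E S_i^{2m}\ge N\,\E Y_{i,1}^{2m}=N p_i(2m)!.
\]
It therefore suffices to show \(N(2m)!\ge \ell(N)^{2m}e^{-m}\) for all integers \(m\ge1\) and \(N\ge1\). Taking logarithms and using \((2m)!\ge(2m/e)^{2m}\), the left minus right side is at least
\[
\log N-2m\log\bigl(\ell(N)\sqrt e/(2m)\bigr).
\]
The function \(x\mapsto x\log(a/x)\) is maximized at \(x=a/e\), where it equals \(a/e\). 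With \(x=2m\) and \(a=\ell(N)\sqrt e\), the subtracted term is at most \(\ell(N)/\sqrt e\). This is at most \(\log N=\ell(N)-1\) once \(\ell(N)\ge (1-e^{-1/2})^{-1}\approx2.54\).

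For the remaining small values of \(N\) (namely \(N\le4\)), I would check directly, using the sharper Stirling bound \((2m)!\ge\sqrt{4\pi m}\,(2m/e)^{2m}\) or a short case check in \(m\), that \(\ell(N)^{2m}e^{-m}\le N(2m)!\). For example, when \(N=1\) this reads \(e^{-m}\le(2m)!\). I expect this elementary but slightly fiddly inequality to be the only delicate point, since it is exactly where the constant \(\sqrt e\) is fixed. If it fails for some tiny case, I would fall back on the cross terms \(\ge (Np_i)^{j}\)-type lower bounds, or absorb the case into a slightly worse constant.
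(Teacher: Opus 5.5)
Your proposal is correct and is essentially the paper's argument. You symmetrize, replace coordinates using the nonnegative even-moment binomial expansion, lower-bound $\E S_i^{2m}$ by its $N$ diagonal multinomial terms, and collapse $Z_{A^{[N]}}(Y)=Z_A(S)$. The one difference is cosmetic: the paper symmetrizes all modes at once via $V=(X-X')/2$, Jensen and homogeneity, which is equivalent to your mode-by-mode Rademacher step.

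The scalar inequality you flagged as delicate needs no Stirling estimate or small-$N$ case check, and it never fails. The exponential series gives $eN=e^{\ell(N)}\geq \ell(N)^{2m}/(2m)!$, hence $N(2m)!\geq \ell(N)^{2m}/e\geq \ell(N)^{2m}e^{-m}$ for all integers $N,m\geq1$.
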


\begin{proof}
\emph{Step 1: symmetrize the bounded inputs.}
Let \(X'\) be an independent copy of the entire input family and
put \(V_i^{(s)}=(X_i^{(s)}-X_i^{\prime(s)})/2\).
These coordinates are independent and symmetric, and
\[
 |V_i^{(s)}|\leq1,\qquad
 \E(V_i^{(s)})^2
 =\tfrac12\E(X_i^{(s)})^2\leq p_i^{(s)}.
\]
For each monomial, independence and centering give
\[
 \E_{X'}\prod_{s=1}^d
 (X_{i_s}^{(s)}-X_{i_s}^{\prime(s)})
 =\prod_{s=1}^d X_{i_s}^{(s)}.
\]
Thus \(Z_A(X)=\E_{X'}Z_A(X-X')\).  Conditional Jensen's
inequality, followed by degree-\(d\) homogeneity, yields
\[
 \E|Z_A(X)|^r\leq\E|Z_A(X-X')|^r
 =2^{dr}\E|Z_A(V)|^r.
\]

\emph{Step 2: compare the scalar moments.}
Put \(S_i^{(s)}=\sum_{u=1}^N Y_{i,u}^{(s)}\).
For every even integer \(m\geq2\), independence gives the
multinomial expansion
\[
 \E(S_i^{(s)})^m
 =\sum_{\substack{m_1+\cdots+m_N=m\\m_u\geq0}}
   \frac{m!}{m_1!\cdots m_N!}
   \prod_{u=1}^N\E(Y_{i,u}^{(s)})^{m_u}.
\]
Terms with an odd \(m_u\) vanish, and all remaining terms are
nonnegative.  Keeping the \(N\) terms with one exponent equal to
\(m\) and all others zero gives
\[
 \E(S_i^{(s)})^m\geq Np_i^{(s)}m!
 \geq \frac{p_i^{(s)}\ell(N)^m}{e}.
\]
The second inequality follows from the exponential series:
\(\exp(\ell(N))=eN\geq\ell(N)^m/m!\).
On the other hand, boundedness gives
\(\E|V_i^{(s)}|^m\leq\E(V_i^{(s)})^2\leq p_i^{(s)}\).
Since \(e^{m/2-1}\geq1\), we obtain
\begin{equation}\label{eq:scalar-replication-comparison}
 \E(V_i^{(s)})^m
 \leq
 \E\left(\frac{\sqrt e\,S_i^{(s)}}{\ell(N)}\right)^m
 \qquad(m\geq2\ \text{even}).
\end{equation}
Both sides have vanishing odd moments.

\emph{Step 3: replace the coordinates one at a time.}
Take the comparison variables independent of \(V\).
After conditioning on every coordinate except one, the chaos is
\(a+bv\), where \(a,b\) are fixed real numbers under the
conditioning.  For symmetric \(v\) and even \(r\),
\[
 \E_v|a+bv|^r
 =\sum_{\substack{0\leq m\leq r\\m\ {\rm even}}}
   \binom rm a^{r-m}b^m\E v^m.
\]
Both \(m\) and \(r-m\) are even, so every coefficient is
nonnegative.
Consequently, \eqref{eq:scalar-replication-comparison} allows
\(v=V_i^{(s)}\) to be replaced by
\(\sqrt e\,S_i^{(s)}/\ell(N)\) without decreasing the conditional
\(r\)-th moment.  Integrate over the other coordinates and repeat
for the entire finite input family.  Every monomial contains
exactly \(d\) coordinates, hence
\[
 \norm{Z_A(V)}_{L_r}
 \leq\frac{e^{d/2}}{\ell(N)^d}\norm{Z_A(S)}_{L_r}.
\]

\emph{Step 4: identify the replicated chaos.}
Distributing the sums gives the exact identity
\[
 Z_A(S)
 =\sum_{\bm i}a_{\bm i}
       \prod_{s=1}^d\left(\sum_{u_s=1}^N Y_{i_s,u_s}^{(s)}\right)
 =\sum_{\bm i}\sum_{\bm u\in[N]^d}
       a_{\bm i}\prod_{s=1}^dY_{i_s,u_s}^{(s)}
 =Z_{A^{[N]}}(Y).
\]
Combining this with Steps 1 and 3 proves the lemma.
\end{proof}

\subsubsection{The reference bound and the cost of replication}
\label{sec:sparse-exponential-input}

The following specialization of
Dai and Wang~\cite[Theorem~1]{DaiWang2026} estimates the comparison
chaos.

\begin{lemma}[Sparse-exponential chaos]
\label{lem:sparse-exponential-chaos}
Let \(W_i^{(s)}=\delta_i^{(s)}\xi_i^{(s)}\), where all Bernoulli variables and
amplitudes are mutually independent,
\(\delta_i^{(s)}\sim\Ber(p_i^{(s)})\),
\(\E\xi_i^{(s)}=0\), and
\(\E\exp(\abs{\xi_i^{(s)}}/K_{\mathrm{exp}})\leq2\).
For every real \(d\)-tensor \(A\) and every \(r\geq2\),
\begin{equation}\label{eq:sparse-exponential-input}
 \norm{Z_A(W)}_{L_r}
 \leq C_dK_{\mathrm{exp}}^d
       \max_{I\subseteq[d]}\Delta_I(A;r).
\end{equation}
\end{lemma}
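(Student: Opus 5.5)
The plan is to deduce \eqref{eq:sparse-exponential-input} directly from the general sparse polynomial estimate of Dai and Wang \cite[Theorem~1]{DaiWang2026}, applied with \(\alpha=1\) to the multilinear polynomial \(f(W)=Z_A(W)\). No new probabilistic argument should be needed. The work is a bookkeeping check that their scales, evaluated for a decoupled homogeneous chaos, are dominated by \(\max_{I}\Delta_I(A;r)\).

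\emph{Step 1: normalize the amplitudes.} Replace \(\xi_i^{(s)}\) by \(\xi_i^{(s)}/K_{\mathrm{exp}}\). By degree-\(d\) homogeneity this rescales \(Z_A(W)\) by \(K_{\mathrm{exp}}^{-d}\). It therefore suffices to treat \(K_{\mathrm{exp}}=1\), that is, amplitudes with \(\E\exp(\abs\xi)\le2\); this is the normalization in Dai--Wang's sub-exponential (\(\alpha=1\)) case.

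\emph{Step 2: compute the expected derivative tensors.} Dai--Wang's bound is a maximum over derivative orders \(k\le d\) of partition-type norms of \(\E\,\mathbf D^kf(W)\), weighted by the Bernoulli probabilities. Because \(f\) is multilinear and decoupled, a \(k\)-th partial derivative in variables \(W^{(s_1)}_{i_1},\dots,W^{(s_k)}_{i_k}\) vanishes unless the modes \(s_1,\dots,s_k\) are distinct. When they are distinct, the derivative equals the slice of \(A\) with those coordinates free, multiplied by the remaining factors \(W^{(s)}_{i_s}\). Centering and independence kill the expectation of those remaining factors unless \(k=d\). Hence only the top-order tensor survives, and it is \(A\) itself, up to the symmetrization convention \(d!\), which is absorbed into \(C_d\). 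This removes every lower-order term from their theorem.

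\emph{Step 3: identify the scales.} For this single tensor, Dai--Wang's moment scale is indexed by a choice of coordinates treated as heavy-tailed, which are maximized over and cost \(r\) each through the \(\mathrm{Exp}\)-moments \(\norm{\delta\xi}_{L_r}\lesssim p^{1/r}r\le r\). The remaining coordinates are grouped by a partition \(\cJ\), each block costing \(\sqrt r\), and the tensor is weighted by \(\sqrt{p_{i_s}^{(s)}}\) on the free coordinates. I would match this term by term with \(r^{\abs I+\abs\cJ/2}M_{I,\cJ}(A)\). Any extra factors of the form \(p^{\theta}\le1\) or \(p^{1/r}\le1\) attached to fixed coordinates are simply bounded by \(1\). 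That bound goes in the correct direction, since we only claim an upper estimate.

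\emph{Main obstacle.} The main difficulty is Step 3: translating Dai--Wang's precise notation (their index sets, their interpolation between \(p\) and \(r\) for fixed coordinates, and their treatment of \(r\) close to \(2\)) into \(M_{I,\cJ}\), and confirming that no mixed term yields a scale not dominated by some \(\Delta_I\). If their fixed-coordinate weights are of the form \(\max_{\bm i_I}\prod_{s\in I}(p_{i_s}^{(s)})^{1/r}\) or similar, the monotone bound by \(1\) settles it.

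If that translation turned out to be awkward, the fallback is a self-contained induction on \(d\). One conditions on the modes \(2,\dots,d\), applies the degree-one Bernstein bound for sparse symmetric-exponential sums, and uses Latała-type decoupled estimates to control the resulting random coefficient norms. I would only pursue this if the direct citation fails.
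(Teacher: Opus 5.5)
Your proposal is correct and matches the paper, which gives no separate argument and simply records this lemma as the specialization of Dai and Wang's Theorem~1 to decoupled chaoses. Your bookkeeping is exactly what that specialization requires: rescale to $K_{\mathrm{exp}}=1$ by homogeneity, note that every expected derivative tensor of order below $d$ vanishes by centering and independence so only the (symmetrized) tensor $A$ remains, and bound any leftover probability factors on the fixed coordinates by one.
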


\begin{lemma}[Exact norm scaling]
\label{lem:norm-scaling}
For integer \(N\geq1\), evaluate the norms of \(A^{[N]}\)
using the inherited probabilities
\(p_{(i,u)}^{[N],(s)}=p_i^{(s)}\).  Then
\[
 M_{I,\cJ}(A^{[N]})
 =N^{|I^c|/2}M_{I,\cJ}(A),\qquad
 \Delta_I(A^{[N]};r)
 =N^{|I^c|/2}\Delta_I(A;r).
\]
\end{lemma}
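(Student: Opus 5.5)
The plan is to prove the identity for $M_{I,\cJ}$ directly from the definition of the partition norm. The identity for $\Delta_I$ then follows at once, since $\Delta_I(\cdot;r)$ is a maximum over $\cJ\in\Pi(I^c)$ of $M_{I,\cJ}$ times the factor $r^{|I|+|\cJ|/2}$, and this factor does not depend on $N$. Fix $I$ and $\cJ=\{J_1,\dots,J_m\}$.

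First, the maximum over the fixed coordinates does not change. A fixed index of $A^{[N]}$ is a tuple $((i_s,u_s))_{s\in I}$, and the replicated slice depends only on $(i_s)_{s\in I}$. Hence the maximum over the $u_s$, $s\in I$, is trivial. It therefore suffices to show the following: for a $U$-tensor $T$ with $U=I^c$, where $T$ already carries the weights $\prod_{s\in U}\sqrt{p^{(s)}_{i_s}}$, its replication $T^{[N]}$ (constant in the $u$-coordinates) satisfies $\norm{T^{[N]}}_\cJ=N^{|U|/2}\norm{T}_\cJ$. The weights are inherited unchanged, so weighting commutes with replication.

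\emph{Lower bound.} Take near-optimal unit vectors $x^{(q)}$ for $T$ and set $\tilde x^{(q)}_{(\bm i_{J_q},\bm u_{J_q})}=N^{-|J_q|/2}x^{(q)}_{\bm i_{J_q}}$. Each $\tilde x^{(q)}$ has unit norm, because there are $N^{|J_q|}$ choices of $\bm u_{J_q}$. Summing over $\bm u_U$ first gives the value $N^{|U|}\prod_q N^{-|J_q|/2}=N^{|U|/2}$ times the original multilinear value.

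\emph{Upper bound.} Given arbitrary unit $\tilde x^{(q)}$, the multilinear form of $T^{[N]}$ equals $\sum_{\bm i_U}t_{\bm i_U}\prod_q y^{(q)}_{\bm i_{J_q}}$, where $y^{(q)}_{\bm i_{J_q}}=\sum_{\bm u_{J_q}}\tilde x^{(q)}_{(\bm i_{J_q},\bm u_{J_q})}$. This factorization holds because $T^{[N]}$ is constant in $\bm u$ and the blocks partition $U$, so the sum over $\bm u_U$ splits as a product over blocks. Cauchy--Schwarz in $\bm u_{J_q}$ gives $\norm{y^{(q)}}_2\le N^{|J_q|/2}$. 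The value is therefore at most $\norm{T}_\cJ\prod_q N^{|J_q|/2}=N^{|U|/2}\norm T_\cJ$.

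The edge cases need separate attention. For $I=[d]$ we have $U=\varnothing$, the norm is the absolute value of the entry, and the factor is $N^0=1$. For $I=\varnothing$ there is no maximum over fixed indices. Both are consistent with the argument above.

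The only real point is the factorization of the sum over $\bm u_U$ into blocks in the upper bound. I expect it to go through with no obstacle, since it uses nothing beyond the block structure of $\cJ$.
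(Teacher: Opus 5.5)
Your proposal is correct and follows essentially the same route as the paper. Both arguments observe that the replica indices in $I$ do not affect the weighted slice, obtain the upper bound by summing each block's test array over its replica indices and applying Cauchy--Schwarz, and obtain the matching lower bound by spreading a test vector uniformly as $N^{-|J|/2}x^{(J)}$. The only difference is cosmetic: the paper normalizes the collapsed vector by $N^{-|J|/2}$, whereas you bound its norm by $N^{|J|/2}$.
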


\begin{proof}
Fix \(I\), the values of \(\bm i_I\), and
\(\cJ\in\Pi(I^c)\).  Write \(U=I^c\) and consider the weighted
slice
\[
 b_{\bm i_U}=a_{\bm i}
             \prod_{s\in U}\sqrt{p_{i_s}^{(s)}}.
\]
The values of the fixed replica indices \(\bm u_I\) do not
affect this slice.  If \(U=\emptyset\), its norm is the absolute
value of one entry, which is unchanged.  Assume \(U\neq\emptyset\).

For each block \(J\in\cJ\), take a test array
\(x^{(J)}_{\bm i_J,\bm u_J}\) with Euclidean norm at most one,
and set
\[
 y^{(J)}_{\bm i_J}
 =N^{-|J|/2}\sum_{\bm u_J\in[N]^J}
                   x^{(J)}_{\bm i_J,\bm u_J}.
\]
Cauchy--Schwarz, applied to the \(N^{|J|}\) replica indices,
gives
\[
 \sum_{\bm i_J}|y^{(J)}_{\bm i_J}|^2
 \leq\sum_{\bm i_J,\bm u_J}
                 |x^{(J)}_{\bm i_J,\bm u_J}|^2\leq1.
\]
Because the blocks partition \(U\), direct expansion yields
\[
 \sum_{\bm i_U,\bm u_U}b_{\bm i_U}
       \prod_{J\in\cJ}x^{(J)}_{\bm i_J,\bm u_J}
 =N^{|U|/2}\sum_{\bm i_U}b_{\bm i_U}
       \prod_{J\in\cJ}y^{(J)}_{\bm i_J}.
\]
Taking the supremum over admissible \(x^{(J)}\) proves that the
replicated partition norm is at most
\(N^{|U|/2}\norm b_{\cJ}\).
Conversely, for any admissible \(y^{(J)}\), take
\[
 x^{(J)}_{\bm i_J,\bm u_J}
       =N^{-|J|/2}y^{(J)}_{\bm i_J}.
\]
These arrays have the same Euclidean norms as the corresponding
\(y^{(J)}\), and the preceding identity gives the reverse
inequality.  Hence the norm scales exactly by \(N^{|U|/2}\).
Maximizing over the fixed original and replica indices proves
the assertion for \(M_{I,\cJ}\); maximizing over partitions
after multiplying by \(r^{\kappa_{I,\cJ}}\) proves the assertion
for \(\Delta_I\).
\end{proof}

\subsubsection{Completion of the replication estimate}
\label{sec:proof-completion}

\begin{proof}[Proof of \cref{prop:replication-profile}]
\emph{Step 1: even moments and integer replication.}
Apply the preceding three lemmas in order, using
\(K_{\mathrm{exp}}=2\):
\[
 \norm{Z_A(X)}_{L_r}
 \lesssim_d
 \frac{\norm{Z_{A^{[N]}}(Y)}_{L_r}}{\ell(N)^d}
 \lesssim_d
 \frac{\max_{I\subseteq[d]}\Delta_I(A^{[N]};r)}{\ell(N)^d}
 =
 \frac{\max_{I\subseteq[d]}N^{|I^c|/2}\Delta_I(A;r)}
      {\ell(N)^d}.
\]
The first two constants depend only on \(d\).

\emph{Step 2: real \(r\) and \(N\).}
Set \(\bar r=2\lceil r/2\rceil\) and \(\bar N=\lceil N\rceil\).
Since \(\bar r\leq2r\), \(\bar N\leq2N\), and
\(\kappa_{I,\cJ}\leq d\), rounding increases the numerator
by at most \(2^{3d/2}\) and cannot decrease \(\ell(N)^d\).
Monotonicity of \(L_r\)-norms therefore proves
\eqref{eq:fixed-replication-real-moment} for real \(r,N\).

\emph{Step 3: convert moments into tails.}
For \(A=0\), the tail bound is immediate.  Otherwise,
since \(Z_A(X)\) is centered, the standard moment-to-tail
implication \cite[Proposition~3.3]{gotze2021concentration}
applied to \eqref{eq:fixed-replication-real-moment} gives
\eqref{eq:replication-mixed-tail}.  
\end{proof}

\subsection{Choosing the replication level}
\label{sec:explicit-proofs}

\begin{proof}[Proof of \cref{thm:main}]
In \cref{prop:replication-profile}, group the terms by the number
\(k=|I^c|\) of free coordinates.  The case \(k=0\) is the
largest-entry term \(\Delta_{[d]}(A;r)=r^dm_A\), and the maximum
over the terms with \(k\geq1\) is \(B_k(A;r)\).  Thus
\begin{equation}\label{eq:common-replication-moment}
 \norm{Z_A(X)}_{L_r}
 \lesssim_d\frac{
 \max\{r^dm_A,\max_{1\leq k\leq d}N^{k/2}B_k(A;r)\}}
 {\ell(N)^d}.
\end{equation}
To keep the numerator equal to \(r^dm_A\), it suffices that
\[
 N^{k/2}B_k(A;r)\leq r^dm_A
 \quad\text{for every }k,\qquad\text{or equivalently}\qquad
 N\leq\min_{1\leq k\leq d}
       \left(\frac{r^dm_A}{B_k(A;r)}\right)^{2/k}.
\]
If \(\lambda_A(r)>1\), its definition makes
\(N=\lambda_A(r)\) the largest admissible common replication
level.  Substitution into \eqref{eq:common-replication-moment}
proves \eqref{eq:effective-main-moment} and hence also
\eqref{eq:explicit-main-moment}.
If \(\lambda_A(r)=1\), use \(N=1\) instead.  Since \(\ell(1)=1\),
\[
 \max\{r^dm_A,\max_kB_k(A;r)\}
 \leq r^dm_A+\max_kB_k(A;r),
\]
which is precisely the right-hand side of
\eqref{eq:explicit-main-moment} in this case.
\end{proof}

\begin{proof}[Proof of \cref{cor:main-tail}]
Put \(q_0=(t/m_A)^{1/d}\).
If \(\eta_A(t)=1\), take \(N=1\) in
\eqref{eq:replication-mixed-tail}.
The entry term gives the rate \(q_0\); every other term gives
\((t/M_{I,\cJ}(A))^{1/\kappa_{I,\cJ}}\).
Since \(\ell(\eta_A(t))=1\), this is exactly the claimed
minimum in \eqref{eq:explicit-mixed-tail}.

Now suppose \(\eta_A(t)>1\), and take
\(N=\eta_A(t)\), \(L=\ell(N)\).
For \(I\subsetneq[d]\), write \(k=|I^c|\geq1\) and
\(\kappa=\kappa_{I,\cJ}\).  The definition of \(\eta_A(t)\)
implies
\[
 N\leq
 \left(\frac{m_Aq_0^{d-\kappa}}{M_{I,\cJ}(A)}\right)^{2/k},
 \qquad\text{hence}\qquad
 N^{k/2}M_{I,\cJ}(A)q_0^\kappa\leq t.
\]
Since \(L\geq1\) and \(\kappa\leq d\),
\[
 \frac{N^{k/2}M_{I,\cJ}(A)(q_0L)^\kappa}{\ell(N)^d}
 \leq tL^{\kappa-d}\leq t.
\]
The entry term satisfies equality:
\(m_A(q_0L)^d/\ell(N)^d=t\).
Consequently, every rate in
\eqref{eq:replication-mixed-tail} is at least \(q_0L\).
This proves the stronger estimate
\(\Pp(|Z_A(X)|\geq t)\leq2e^{-c_dq_0L}\), which implies
\eqref{eq:explicit-mixed-tail} because its displayed minimum
is at most \(q_0L\).
\end{proof}

For the quadratic corollary we also need forward decoupling
for diagonal-free polynomials
\cite[Theorem~1]{deLaPenaMontgomerySmith1995}.
If \(A\) is diagonal-free and
\(Q_A(X)=\sum_{\bm i}a_{\bm i}X_{i_1}\cdots X_{i_d}\), then
\begin{equation}\label{eq:decoupling-comparison}
\begin{split}
 \Pp(\abs{Q_A(X)}\geq t)
 &\leq C_d\Pp\left(
  \abs{Z_A(X^{(1)},\ldots,X^{(d)})}\geq t/C_d\right),\\
 \norm{Q_A(X)}_{L_r}
 &\lesssim_d\norm{Z_A(X^{(1)},\ldots,X^{(d)})}_{L_r},
 \qquad r\geq2,
\end{split}
\end{equation}
where the input vectors on the right are independent copies.
Only the forward inequality is used; it does not require symmetry
of \(A\).  The moment comparison follows by integrating the
cited tail inequality, with a constant depending only on \(d\).

\begin{proof}[Proof of \cref{cor:quadratic-moment,cor:rectangular-cubic}]
\emph{Step 1: reduce the quadratic form to the decoupled bound.}
The zero diagonal allows us to apply the moment comparison in
\eqref{eq:decoupling-comparison} to \(Q_A(X)=X^{\mathsf T}AX\).
Combining it with \eqref{eq:fixed-replication-real-moment}
gives the same common-\(N\) moment estimate for the quadratic
form, with a different universal constant.  The same
moment-to-tail implication and choices of \(N\) therefore give the same
moment and tail conclusions for \(Q_A\).
In particular, decoupling is applied before the moment-to-tail
conversion, so its constants do not change the arguments of
\(\lambda_A\) or \(\eta_A\).
The cubic chaos is already decoupled and needs no such step.

\emph{Step 2: identify all non-entry norms.}
For \(d=2\), fixing one coordinate leaves a weighted row or
column.  Symmetry of \(A\) makes their maximal Euclidean norms
equal to \(\norm{AD_p}_{2,\infty}\).
With neither coordinate fixed, the one-block partition gives
the Frobenius norm of \(D_pAD_p\), and the two-block partition
gives its operator norm.  Since
\(\kappa=2-k+|\cJ|/2\), this yields
\[
 B_1(A;r)=r^{3/2}\norm{AD_p}_{2,\infty},\qquad
 B_2(A;r)=\max\{\sqrt r\norm{D_pAD_p}_{\mathrm F},
                     r\norm{D_pAD_p}_{\mathrm{op}}\}.
\]

For \(d=3\), a slice with \(k\) free coordinates has weight
\(p^{k/2}\), and \(\kappa=3-k+|\cJ|/2\).
With one free coordinate, the slices are vectors, so the only
partition gives \(V_1\) with exponent \(\kappa=5/2\).
With two free coordinates, the slices are matrices: the one-block
partition gives their weighted Frobenius norm \(V_2\), with
\(\kappa=3/2\), while the two-block partition gives their weighted
operator norm \(W_2\), with \(\kappa=2\).

With all three coordinates free, one block gives the weighted
Frobenius norm \(V_3\), with \(\kappa=1/2\); two blocks give the
weighted matrix-flattening norms, whose maximum is \(W_3\), with
\(\kappa=1\); and three blocks give the weighted injective norm
\(I_3\), with \(\kappa=3/2\).
These scales already include the maxima over the applicable
fixed coordinates and choices of modes.  Therefore
\[
\begin{aligned}
 B_1(A;r)&=r^{5/2}V_1,\\
 B_2(A;r)&=\max\{r^{3/2}V_2,r^2W_2\},\\
 B_3(A;r)&=\max\{\sqrt r\,V_3,rW_3,r^{3/2}I_3\}.
\end{aligned}
\]

\emph{Step 3: bound the replication parameter.}
Each non-entry term above has the form \(r^\kappa M\), where
\(k\geq1\) coordinates remain free.  The corresponding term in
\(\gamma_d(A)\) is exactly \((M/m_A)^{1/(d-\kappa)}\).
Thus
\(M\leq m_A\gamma_d(A)^{d-\kappa}\).
For \(u=r/\gamma_d(A)\geq1\), the associated candidate in
the minimum defining \(\lambda_A(r)\) satisfies
\[
 \left(\frac{r^dm_A}{r^\kappa M}\right)^{2/k}
 \geq u^{\,2(d-\kappa)/k}\geq u.
\]
The last inequality uses
\(d-\kappa=k-|\cJ|/2\geq k/2\), because
\(|\cJ|\leq k\).  Taking the minimum over all terms proves
\(\lambda_A(r)\geq r/\gamma_d(A)\).
If \(r\geq\max\{2,e\gamma_d(A)\}\), this lower bound is
strictly greater than one, and
\[
 \ell(\lambda_A(r))\geq1+\log(r/\gamma_d(A)).
\]
Substitution into \eqref{eq:effective-main-moment} proves
both stated moment bounds.

\emph{Step 4: expand the tail rates.}
Put \(q_0=(t/m_A)^{1/d}\).
For a non-entry scale \(M\), its candidate in the definition of
\(\eta_A(t)\) is
\[
 \left(\frac{m_A}{M}
       \left(\frac{t}{m_A}\right)^{1-\kappa/d}\right)^{2/k}
 =\left(\frac{m_Aq_0^{d-\kappa}}{M}\right)^{2/k}.
\]
If \(q_0/\gamma_d(A)\geq1\), the calculation in Step 3,
with \(r\) replaced by \(q_0\), bounds every candidate below
by \(q_0/\gamma_d(A)\).
Otherwise the cutoff in \(\eta_A(t)\) gives the bound one.
Hence, for every \(t>0\),
\[
 \eta_A(t)\geq1\vee\frac{(t/m_A)^{1/d}}{\gamma_d(A)}.
\]
This is an algebraic comparison and does not require \(q_0\geq2\).
Finally, the non-entry rates in
\eqref{eq:explicit-mixed-tail} are \((t/M)^{1/\kappa}\).
For \(d=2\), their exponents are \(2/3,2,1\), respectively;
for \(d=3\), the corresponding exponents are \(2/5,2/3,1/2,2,1,2/3\).
Inserting these rates and the lower bound on \(\eta_A(t)\)
proves \eqref{eq:quadratic-tail} and
\eqref{eq:rectangular-cubic-tail}.
\end{proof}

\section{Proofs for sparse Khatri--Rao embeddings}
\label{sec:application-proofs}
\label{sec:proof-khatri-rao}

The proof has two parts.  First, the replication estimate controls
the moments of one sketch row through the geometry factor
\(G_x(\delta)\).  Second, independence across rows turns this
finite range of moment bounds into concentration of the average
squared norm.  A net argument then gives the subspace guarantee.

\begin{proof}[Proof of \cref{prop:sparse-khatri-rao}]
Fix \(x\neq0\) and \(0<\delta<1\).  Write
\(R=\log(e^2/\delta)>2\), so the parameter in the definition of
\(G_x(\delta)\) is \(q=2R\).

\emph{Step 1: identify the mean and the independent summands.}
For each row put
\[
 Z_m(x)=\sqrt M(\Phi x)_m
 =p^{-d/2}\sum_{\bm i\in[n]^d}
           x_{\bm i}\prod_{s=1}^d\xi_{m i_s}^{(s)}.
\]
The variables \(Z_1(x),\ldots,Z_M(x)\) are independent and
identically distributed because they use disjoint, identically
distributed families of sketch entries.
For fixed \(s,m\),
\(\E[\xi_{mi}^{(s)}\xi_{mj}^{(s)}]=p\ind_{\{i=j\}}\).
Expanding the square and using independence across modes gives
\[
 \E Z_m(x)^2
 =p^{-d}\sum_{\bm i,\bm j}x_{\bm i}x_{\bm j}
          \prod_{s=1}^d
               \E[\xi_{m i_s}^{(s)}\xi_{m j_s}^{(s)}]
 =\sum_{\bm i}x_{\bm i}^2=\norm{x}_{\mathrm F}^2.
\]
Consequently,
\[
 \norm{\Phi x}_2^2=\frac1M\sum_{m=1}^M Z_m(x)^2,
 \qquad
 \E\norm{\Phi x}_2^2=\norm{x}_{\mathrm F}^2.
\]

\emph{Step 2: control one row up to moment order \(q\).}
Fix \(2\leq r\leq q\), and set \(\rho=\rho_x(\delta)\).
The sketch entries are centered, bounded by one, and have
variance \(p\).  Apply \cref{prop:replication-profile} with
coefficient tensor \(x\) and replication level \(N=\rho\).
Writing \(U=I^c\), the normalization \(p^{-d/2}\) in \(Z_m(x)\)
and the weight \(p^{|U|/2}\) in \(M_{I,\cJ}(x)\) give
\[
 \norm{Z_m(x)}_{L_r}
 \leq\frac{C_d}{\ell(\rho)^d}
 \max_{\substack{U\subseteq[d]\\\cJ\in\Pi(U)}}
 \left[
 \rho^{|U|/2}
 r^{d-|U|+|\cJ|/2}
 p^{-(d-|U|)/2}
 \max_{\bm i_{U^c}}\norm{x_{\bm i_U}}_{\cJ}
 \right].
\]
We square this estimate and divide by \(r\) to compare its terms
with the scales \(a_0,\ldots,a_d\).

If \(U=\emptyset\), the term before the common divisor
\(\ell(\rho)^{2d}\) is
\[
 r^{2d-1}p^{-d}\norm{x}_\infty^2\leq a_0.
\]
If \(k=|U|\geq1\) and \(\cJ\) has one block, its norm is the
slice Frobenius norm.  Its contribution is at most
\[
 \rho^k r^{2(d-k)}p^{-(d-k)}
       \max_{\bm i_{U^c}}\norm{x_{\bm i_U}}_{\mathrm F}^2
 \leq \rho^k
       \left(\frac{q^2}{p}\right)^{d-k}
       \max_{\bm i_{U^c}}\norm{x_{\bm i_U}}_{\mathrm F}^2.
\]
Finally, suppose \(j=|\cJ|\geq2\).
Partition norms increase when blocks are merged
\cite[Section~2]{DaiWang2026}, so
\[
 \norm{x_{\bm i_U}}_{\cJ}
 \leq\max_{\varnothing\neq V\subsetneq U}
       \norm{x_{\bm i_U}}_{\{V,U\setminus V\}}.
\]
Since \(j\leq k\) and \(2\leq r\leq q\),
\[
 r^{2(d-k)+j-1}
 \leq q^{2(d-k)+k-1}.
\]
The resulting contribution is therefore bounded by
\[
 \rho^k\left(\frac{q^2}{p}\right)^{d-k}q^{k-1}
 \max_{\substack{\bm i_{U^c}\\\varnothing\neq V\subsetneq U}}
       \norm{x_{\bm i_U}}_{\{V,U\setminus V\}}^2.
\]
These last two bounds are precisely the two parts of
\(\rho^k a_k\).  It follows that
\[
 \frac{\norm{Z_m(x)}_{L_r}^2}{r}
 \leq C_d\frac{\max\{a_0,\rho a_1,\ldots,\rho^d a_d\}}
                   {\ell(\rho)^{2d}}
 \leq C_dG_x(\delta)\norm{x}_{\mathrm F}^2.
\]
For the last inequality, if \(\rho>1\), its definition gives
\(\rho^k a_k\leq a_0\) for every \(k\geq1\).
If \(\rho=1\), the quotient is simply
\(\max_{0\leq k\leq d}a_k\), which appears in the definition of
\(G_x(\delta)\).
We have proved
\begin{equation}\label{eq:khatri-row-standard-moment}
 \norm{Z_m(x)}_{L_r}
 \lesssim_d\sqrt{rG_x(\delta)}\,\norm{x}_{\mathrm F},
 \qquad 2\leq r\leq q=2R.
\end{equation}

\emph{Step 3: center and average the squared rows.}
Put \(Y_m=Z_m(x)^2-\E Z_m(x)^2\).
These variables are independent, identically distributed, and
centered.  Also \(G_x(\delta)\geq1\), because
\(a_d\geq\norm{x}_{\mathrm F}^2\).
For \(2\leq s\leq R\), Step 2 applies at order \(2s\leq q\), so
\begin{equation}\label{eq:khatri-square-moment}
 \begin{aligned}
 \norm{Y_m}_{L_s}
 &\leq\norm{Z_m(x)}_{L_{2s}}^2+\E Z_m(x)^2\\
 &\leq C_dG_x(\delta)s\norm{x}_{\mathrm F}^2.
 \end{aligned}
\end{equation}

By Lata{\l}a's moment estimate and its centered extension by
symmetrization \cite[Corollary~2 and Remark~2]{Latala97}, if
\(\norm{Y_1}_{L_s}\leq Ks\) for \(2\leq s\leq R\), then
\begin{equation}\label{eq:truncated-bernstein-moment}
 \begin{aligned}
 \left\|\frac1M\sum_{m=1}^M Y_m\right\|_{L_R}
 &\lesssim \frac1M
   \sup_{\max\{2,R/M\}\leq s\leq R}
      \frac Rs(M/R)^{1/s}\norm{Y_1}_{L_s}\\
 &\lesssim K\left\{\sqrt{\frac RM}+\frac RM\right\}.
 \end{aligned}
\end{equation}
Here \((M/R)^{1/s}\leq\max\{1,\sqrt{M/R}\}\) for \(s\geq2\),
so only moments through order \(R\) are needed.
Applying \eqref{eq:truncated-bernstein-moment} with
\(K=C_dG_x(\delta)\norm{x}_{\mathrm F}^2\) bounds the
\(L_R\)-norm of
\(\norm{\Phi x}_2^2-\norm{x}_{\mathrm F}^2\).

\emph{Step 4: obtain the failure probability and row count.}
Markov's inequality at \(e\) times this \(L_R\)-bound gives
\[
 \Pp\left(
 \big|\norm{\Phi x}_2^2-\norm{x}_{\mathrm F}^2\big|
 >C_dG_x(\delta)\norm{x}_{\mathrm F}^2
      \left\{\sqrt{\frac RM}+\frac RM\right\}
 \right)
 \leq e^{-R}=\delta/e^2\leq\delta.
\]
Since \(R\asymp\log(2/\delta)\), division by
\(\norm{x}_{\mathrm F}^2\) proves \eqref{eq:khatri-fixed-vector}.
For the relative-error assertion, suppose
\(M\geq C_0G_x(\delta)^2\varepsilon^{-2}R\).
Then
\[
 G_x(\delta)\sqrt{\frac RM}\leq\frac{\varepsilon}{\sqrt{C_0}},
 \qquad
 G_x(\delta)\frac RM
 \leq\frac{\varepsilon^2}{C_0G_x(\delta)}
 \leq\frac{\varepsilon}{C_0},
\]
where \(G_x(\delta)\geq1\) and \(\varepsilon\leq1\) were used.
Choosing \(C_0\) sufficiently large in terms of \(d\) proves
\eqref{eq:khatri-sample-size}.

\emph{Step 5: pass from one tensor to a subspace.}
The case \(W=\{0\}\) is immediate.  Otherwise, choose a
\(1/4\)-net \(\mathcal N\) of its unit sphere with
\(|\mathcal N|\leq9^k\)
\cite[Corollary~4.2.13]{vershynin2018high}.
Apply the fixed-vector result on \(\mathcal N\) with error
\(\varepsilon/2\) and failure probability \(\delta/9^k\).
The assumed bound on the geometry factors and
\(\log(2\cdot9^k/\delta)=k\log9+\log(2/\delta)\) show that
the stated row count suffices.  A union bound therefore gives,
with probability at least \(1-\delta\),
\[
 \sup_{v\in\mathcal N}
     \big|\norm{\Phi v}_2^2-1\big|\leq\varepsilon/2.
\]
For the self-adjoint operator
\(T=(\Phi|_W)^*(\Phi|_W)-I_W\), the  net bound
\cite[Exercise~4.4.3(b)]{vershynin2018high} yields
\[
 \norm T_{\mathrm{op}}
 \leq2\sup_{v\in\mathcal N}|\langle Tv,v\rangle|
 \leq\varepsilon.
\]
This is the asserted norm preservation on \(W\).
\end{proof}

\appendix
\section{Moment bounds beyond the bounded decoupled setting}
\label{app:extensions}

\subsection{Bounded non-decoupled chaoses}
\label{app:nondecoupled}

For a diagonal-free polynomial, decoupling replaces the repeated
input vector \(X\) by independent copies
\(X^{(1)},\ldots,X^{(d)}\), at a cost depending only on the degree.
Every copy of \(X_i\) inherits the same variance proxy \(p_i\).
Throughout this subsection, evaluate \(M_{I,\cJ}(A)\),
\(B_k(A;r)\), and \(\lambda_A(r)\) with \(p_i^{(s)}=p_i\).

\begin{proposition}[Bounded non-decoupled moment bound]
\label{prop:nondecoupled-moment}
Let \(A=(a_{i_1,\ldots,i_d})\) be a nonzero real diagonal-free
\(d\)-tensor: \(a_{i_1,\ldots,i_d}=0\) whenever two indices
coincide.  Let \(X_1,\ldots,X_n\) be independent, with
\[
 \E X_i=0,\qquad |X_i|\leq1\ \text{a.s.},\qquad
 \E X_i^2\leq p_i,\qquad 0<p_i\leq1,
\]
and define
\[
 Q_A(X)=\sum_{i_1,\ldots,i_d=1}^n
              a_{i_1,\ldots,i_d}X_{i_1}\cdots X_{i_d}.
\]
Writing \(m_A=\norm A_\infty\), for every \(r\geq2\) we have
\begin{equation}\label{eq:nondecoupled-moment}
 \norm{Q_A(X)}_{L_r}
 \lesssim_d
 \max_{1\leq k\leq d}B_k(A;r)
 +\frac{r^dm_A}{\ell(\lambda_A(r))^d}.
\end{equation}
When \(\lambda_A(r)>1\), this improves to
\begin{equation}\label{eq:nondecoupled-effective-moment}
 \norm{Q_A(X)}_{L_r}
 \lesssim_d\frac{r^dm_A}{\ell(\lambda_A(r))^d}.
\end{equation}
\end{proposition}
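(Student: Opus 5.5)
The plan is to reduce to the decoupled estimate before optimizing over the replication level, exactly as in Step 1 of the proof of \cref{cor:quadratic-moment,cor:rectangular-cubic}. Since \(A\) is diagonal-free, the forward decoupling inequality \eqref{eq:decoupling-comparison} gives
\[
 \norm{Q_A(X)}_{L_r}\lesssim_d\norm{Z_A(X^{(1)},\ldots,X^{(d)})}_{L_r},\qquad r\geq2,
\]
where \(X^{(1)},\ldots,X^{(d)}\) are independent copies of \(X\). The decoupling theorem needs neither symmetry of \(A\) nor symmetry of the inputs. The family \((X^{(s)}_i)\) is mutually independent, and each coordinate is centered, bounded by one, and satisfies \(\E(X_i^{(s)})^2\leq p_i\). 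Hence \eqref{eq:assump-X} holds with \(p_i^{(s)}=p_i\), and this is the choice used to evaluate \(M_{I,\cJ}(A)\), \(B_k(A;r)\), and \(\lambda_A(r)\).

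Next, I would invoke \cref{prop:replication-profile} for the decoupled chaos. For every real \(N\geq1\), this gives
\[
 \norm{Q_A(X)}_{L_r}\lesssim_d\frac{\max\{r^dm_A,\max_{1\leq k\leq d}N^{k/2}B_k(A;r)\}}{\ell(N)^d}.
\]
This is the analogue of \eqref{eq:common-replication-moment}. The decoupling constant depends only on \(d\). It multiplies the whole bound and does not interact with the choice of \(N\), so it is absorbed into \(\lesssim_d\).

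Finally, I would choose \(N\) exactly as in the proof of \cref{thm:main}. If \(\lambda_A(r)>1\), take \(N=\lambda_A(r)\). By the definition of \(\lambda_A\), \(N^{k/2}B_k(A;r)\leq r^dm_A\) for every \(k\), so the numerator equals \(r^dm_A\). This proves \eqref{eq:nondecoupled-effective-moment}, and that in turn implies \eqref{eq:nondecoupled-moment}. If \(\lambda_A(r)=1\), take \(N=1\). Since \(\ell(1)=1\), the bound becomes \(r^dm_A+\max_kB_k(A;r)\), which is \eqref{eq:nondecoupled-moment}.

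The only step needing care is the moment form of decoupling. The cited de la Peña--Montgomery-Smith result is a tail comparison. Integrating it against \(rt^{r-1}\,dt\) yields \(\E|Q_A|^r\leq C_d\,C_d^{r}\E|Z_A|^r\), so after taking \(r\)-th roots the constant is \(C_d^{1/r}C_d\leq C_d'\), uniformly in \(r\geq2\). Everything else is a direct transfer of the decoupled argument.
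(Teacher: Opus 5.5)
Your proposal is correct and matches the paper's proof: forward decoupling via \eqref{eq:decoupling-comparison}, with the copied inputs satisfying \eqref{eq:assump-X} for \(p_i^{(s)}=p_i\), followed by the decoupled bound. The differences are cosmetic. You re-run the choice of \(N\) from the proof of \cref{thm:main} rather than citing \cref{thm:main} and \eqref{eq:effective-main-moment} directly. You also spell out the tail-to-moment integration giving the constant \(C_d^{1/r}C_d\), which the paper only mentions.
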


\begin{proof}
Let \(X^{(1)},\ldots,X^{(d)}\) be independent copies of \(X\).
By the forward decoupling inequality \eqref{eq:decoupling-comparison},
\[
 \norm{Q_A(X)}_{L_r}
 \lesssim_d\norm{Z_A(X^{(1)},\ldots,X^{(d)})}_{L_r}.
\]
The copied family satisfies \eqref{eq:assump-X} with
\(p_i^{(s)}=p_i\), so both conclusions follow directly from
\cref{thm:main} and its effective form
\eqref{eq:effective-main-moment}.  In particular, the individual variance bounds,
and hence all weighted norms and \(\lambda_A(r)\), are unchanged.
No symmetry of \(A\) is needed for forward decoupling.
\end{proof}

For \(d=2\), this is the quadratic form \(X^{\mathsf T}AX\)
with zero diagonal, as used in \cref{cor:quadratic-moment}.

\subsection{Sparse \texorpdfstring{$\psi_\alpha$}{psi-alpha} inputs}
\label{app:subexponential}

Let \(1<\alpha\leq\infty\), with \(1/\infty=0\), and put
\(\beta=1-1/\alpha\in(0,1]\).  For finite \(\alpha\), use the
standard Orlicz norm
\[
 \norm{\xi}_{\psi_\alpha}
 =\inf\left\{K>0:
       \E\exp\bigl((|\xi|/K)^\alpha\bigr)\leq2\right\};
 \qquad
 \norm{\xi}_{\psi_\infty}=\norm{\xi}_{L_\infty}.
\]
The cases \(\alpha=2\) and \(\alpha=\infty\) correspond to
sub-Gaussian and bounded amplitudes.  Relative to the exponential
reference law, their smaller moments yield a logarithmic gain
of power \(\beta\) per mode.

\begin{proposition}[Bennett-type bound for sparse sub-exponential chaoses]
\label{prop:alpha-bennett}
Let \(K>0\) and \(X_i^{(s)}=\delta_i^{(s)}\xi_i^{(s)}\), where all
Bernoulli random variables and amplitudes are mutually independent and
\[
 \delta_i^{(s)}\sim\Ber(p_i^{(s)}),\qquad 0<p_i^{(s)}\leq1,\qquad
 \E\xi_i^{(s)}=0,\qquad \norm{\xi_i^{(s)}}_{\psi_\alpha}\leq K.
\]
For a nonzero real rectangular tensor \(A\), evaluate
\(M_{I,\cJ}(A)\), \(B_k(A;r)\), and \(\lambda_A(r)\) using the
Bernoulli probabilities, and write \(m_A=\norm A_\infty\).
Then, for every \(r\geq2\),
\begin{equation}\label{eq:alpha-bennett-moment}
 \norm{Z_A(X)}_{L_r}
 \lesssim_d K^d
 \left[\max_{1\leq k\leq d}B_k(A;r)
       +\frac{r^dm_A}{\ell(\lambda_A(r))^{d\beta}}\right].
\end{equation}
When \(\lambda_A(r)>1\),
\begin{equation}\label{eq:alpha-bennett-effective-moment}
 \norm{Z_A(X)}_{L_r}
 \lesssim_d\frac{K^dr^dm_A}{\ell(\lambda_A(r))^{d\beta}}.
\end{equation}
The constants are uniform over \(1<\alpha\leq\infty\).
The same bounds hold for \(Q_A(X)\) when \(A\) is diagonal-free,
the Bernoulli random variables and centered amplitudes satisfy the same assumptions,
and \(p_i^{(s)}=p_i\).
\end{proposition}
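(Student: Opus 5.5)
We follow the proof of \cref{thm:main}: first prove a replication bound analogous to \cref{prop:replication-profile}, now with divisor $\ell(N)^{d\beta}$ and an extra factor $K^d$. Then choose $N=\lambda_A(r)$ (or $N=1$). The only new ingredient is the scalar comparison in Step~2 of \cref{lem:replicated-moment-comparison}. The other steps carry over verbatim: symmetrization, coordinatewise replacement using even moments, the identity $Z_A(S)=Z_{A^{[N]}}(Y)$, \cref{lem:sparse-exponential-chaos}, and \cref{lem:norm-scaling}.

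\emph{Scalar comparison.} Let $V=(X-X')/2$ with $X=\delta\xi$. Then $V$ is symmetric, and for even $m\geq2$,
\[
\E V^m\leq \E|X|^m\leq p\,\E|\xi|^m\leq p\,(C K)^m m^{m/\alpha}.
\]
This uses the standard $\psi_\alpha$ moment bound $\norm{\xi}_{L_m}\lesssim K m^{1/\alpha}$, which holds uniformly for $\alpha>1$ since $\alpha\geq1$. The replicated exponential sum satisfies $\E S^m\geq Np\,m!\geq Np(m/e)^m$. We need a constant $c$ with $\E V^m\leq \E(cK S/\ell(N)^\beta)^m$ for all even $m$. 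It suffices that
\[
C^m m^{m/\alpha}\ell(N)^{\beta m}\leq c^m N (m/e)^m,
\]
that is, $(C'\ell(N)/m)^{\beta m}\leq N$ after taking $m$-th roots and using $1-1/\alpha=\beta$. Set $L=\ell(N)$ and $u=m/L$. The condition becomes $\beta m\log(C'L/m)\leq \log N=L-1$, i.e.\ $\beta u\log(C'/u)\leq 1-1/L$. The left side is at most $\beta C'/e$ over all $u>0$. So if we choose $C'$ small, which is allowed by enlarging $c$, then for $L\geq2$ the inequality holds. For $L<2$, i.e.\ $N$ small, we use $N=1$ directly, where the comparison is trivial up to constants. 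Since the bound is uniform in $u$, the constants are uniform in $\alpha\in(1,\infty]$. The case $\alpha=\infty$ is the bounded case with $\beta=1$.

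This is the main obstacle: producing exactly the power $\beta$ with constants independent of $\alpha$. If the crude bound $m^{m/\alpha}$ is too lossy, I would instead keep $\E|\xi|^m\leq \Gamma(m/\alpha+1)(CK)^m$ and compare with $m!$ through Stirling's formula. Once the scalar comparison holds, Step~3 gives
\[
\norm{Z_A(V)}_{L_r}\leq (cK)^d\ell(N)^{-d\beta}\norm{Z_{A^{[N]}}(Y)}_{L_r}.
\]
Combining this with \cref{lem:sparse-exponential-chaos,lem:norm-scaling} and rounding $r,N$ as before yields
\[
\norm{Z_A(X)}_{L_r}\lesssim_d K^d\max_I N^{|I^c|/2}\Delta_I(A;r)/\ell(N)^{d\beta}.
\]
One detail in Step~1: $X$ need not be bounded, but the symmetrization via conditional Jensen only uses centering and independence, so it still applies.

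\emph{Choice of $N$ and non-decoupled case.} Group the terms by $k=|I^c|$ exactly as in the proof of \cref{thm:main}. If $\lambda_A(r)>1$, take $N=\lambda_A(r)$; then each $N^{k/2}B_k\leq r^dm_A$, which gives \eqref{eq:alpha-bennett-effective-moment}. If $\lambda_A(r)=1$, take $N=1$, which gives \eqref{eq:alpha-bennett-moment}. For diagonal-free $Q_A$, apply the forward decoupling moment comparison \eqref{eq:decoupling-comparison}. It needs only independence and centering, so it passes to independent copies $X^{(s)}$ with $p_i^{(s)}=p_i$, and the decoupled bound then applies unchanged.
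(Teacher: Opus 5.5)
Your proposal is correct and follows the paper's proof step for step: symmetrization, comparison with the sparse-exponential replica sums, coordinatewise replacement, \cref{lem:sparse-exponential-chaos,lem:norm-scaling}, the choice $N=\lambda_A(r)$ (or $N=1$), and forward decoupling for $Q_A$. The only difference is the scalar comparison, which the paper does in one line via $\E|\xi|^m\le(2\,m!)^{1/\alpha}\le 2(m!)^{1-\beta}$ and $N(m!)^\beta\ge(Nm!)^\beta\ge e^{-\beta}\ell(N)^{m\beta}$, avoiding your optimization in $u=m/L$ and the separate case $L<2$. In your version the constant $C'=(Ce/c)^{1/\beta}$ actually depends on $\beta$, but it is at most $Ce/c$ once $c\ge Ce$, so your claim of constants uniform over $1<\alpha\le\infty$ still holds.
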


These norms use Bernoulli probabilities.  For unbounded amplitudes,
\(p_i^{(s)}\) need not itself bound \(\E(X_i^{(s)})^2\); the
amplitude scale is carried by \(K^d\).
The reference law remains exponential, so the coefficient scales
and the choice \(\lambda_A(r)\) stay the same.  Only the power
of the logarithm changes.

\begin{proof}
By homogeneity, first take \(K=1\).  For finite \(\alpha>1\)
and integers \(m\geq2\), the Orlicz assumption gives
\(\E|\xi_i^{(s)}|^{\alpha m}\leq2m!\).  Thus
\begin{equation}\label{eq:alpha-input-moments}
 \E|X_i^{(s)}|^m
 =p_i^{(s)}\E|\xi_i^{(s)}|^m
 \leq p_i^{(s)}(2m!)^{1/\alpha}
 \leq2p_i^{(s)}(m!)^{1-\beta}.
\end{equation}
For \(\alpha=\infty\), boundedness gives
\(\E|X_i^{(s)}|^m\leq p_i^{(s)}\), so the same bound holds.

As in the proof of \cref{lem:replicated-moment-comparison},
put \(V=(X-X')/2\), where \(X'\) is an independent copy of \(X\).
Symmetrization gives
\(\norm{Z_A(X)}_{L_r}\leq2^d\norm{Z_A(V)}_{L_r}\), and
convexity gives \(\E|V_i^{(s)}|^m\leq\E|X_i^{(s)}|^m\).
For integer \(N\geq1\), let
\(S_i^{(s)}=\sum_{u=1}^N Y_{i,u}^{(s)}\) be the sparse-exponential
replica sums from \cref{sec:replicated-moment-comparison},
independent of \(V\), with inherited probabilities \(p_i^{(s)}\).
The same scalar calculation gives
\(\E(S_i^{(s)})^m\geq Np_i^{(s)}m!\) for even \(m\geq2\).
Consequently,
\[
 \frac{\E|V_i^{(s)}|^m}{\E(S_i^{(s)})^m}
 \leq\frac{2}{N(m!)^\beta}
 \leq\frac{2e^\beta}{\ell(N)^{m\beta}}
 \leq\left(\frac{\sqrt{2e}}{\ell(N)^\beta}\right)^m,
\]
where
\(N(m!)^\beta\geq(Nm!)^\beta
 \geq e^{-\beta}\ell(N)^{m\beta}\).
This replaces the scalar comparison in the proof of
\cref{lem:replicated-moment-comparison}; its coordinate-replacement
argument therefore yields, for even \(r\),
\[
 \norm{Z_A(X)}_{L_r}
 \leq\frac{(2\sqrt{2e})^d}{\ell(N)^{d\beta}}
       \norm{Z_{A^{[N]}}(Y)}_{L_r}.
\]
Applying \cref{lem:sparse-exponential-chaos,lem:norm-scaling}
and restoring \(K\) gives
\begin{equation}\label{eq:alpha-replication-moment}
 \norm{Z_A(X)}_{L_r}
 \lesssim_d K^d
 \frac{\displaystyle\max_{I\subseteq[d]}
              N^{|I^c|/2}\Delta_I(A;r)}
      {\ell(N)^{d\beta}}.
\end{equation}
The rounding argument in \cref{sec:proof-completion} extends
this estimate to all real \(r\geq2\), \(N\geq1\).
Now choose \(N=\lambda_A(r)\) when \(\lambda_A(r)>1\), and
\(N=1\) otherwise, exactly as in the proof of \cref{thm:main}.
The numerator is unchanged, so this proves both moment bounds.
All constants are uniform in \(\alpha\): the scalar comparison
uses the fixed constant \(\sqrt{2e}\), and the remaining steps
use only \(0<\beta\leq1\).

For the non-decoupled assertion, apply the moment comparison
in \eqref{eq:decoupling-comparison} and then the preceding bounds.
Each independent copy retains the selector probability \(p_i\)
and amplitude bound \(K\).
\end{proof}

The proof only used scalar moment bounds, so a representation as a
product with a Bernoulli random variable is not essential.  For example, independent
centered inputs satisfying
\[
 |X_i^{(s)}|\leq K\quad\text{a.s.},\qquad
 \E(X_i^{(s)})^2\leq K^2p_i^{(s)}
\]
obey
\(\E|X_i^{(s)}|^m\leq K^{m-2}\E(X_i^{(s)})^2
 \leq K^mp_i^{(s)}\).
The proof therefore applies with \(\beta=1\), recovering
\cref{thm:main} when \(K=1\).
At the other endpoint, \(\beta=0\) gives the sparse-exponential
estimate with no logarithmic gain.

\medskip
\noindent\textbf{Tail bound in the logarithmic regime.}
Let \(\eta_A\) be defined by \eqref{eq:explicit-tail-separation},
using the Bernoulli probabilities.  Under
\cref{prop:alpha-bennett}, whenever \(\eta_A(t/K^d)\geq e\),
\begin{equation}\label{eq:alpha-bennett-effective-tail}
 \Pp\bigl(|Z_A(X)|\geq t\bigr)
 \leq2\exp\left[-c_d
       \left(\frac{t}{K^dm_A}\right)^{1/d}
       \ell\bigl(\eta_A(t/K^d)\bigr)^\beta\right].
\end{equation}
The same bound holds for \(Q_A(X)\) under the non-decoupled
assumptions.  Thus the logarithm enters the tail rate with power
\(1/2\) for sub-Gaussian amplitudes and power one for bounded
amplitudes.

\begin{proof}
We adapt the proof of \cref{cor:main-tail}, putting
\[
 w=\left(\frac{t}{K^dm_A}\right)^{1/d},\qquad
 N=\eta_A(t/K^d),\qquad L=\ell(N).
\]
Since \(N>1\), its definition gives, for every \(I,\cJ\),
\[
 K^dN^{|I^c|/2}M_{I,\cJ}(A)w^{\kappa_{I,\cJ}}\leq t,
\]
with equality for the entry term.  Choose \(r=cwL^\beta\),
where \(0<c\leq1\) depends only on \(d\).
If \(r\geq2\), \eqref{eq:alpha-replication-moment} implies
\[
 \norm{Z_A(X)}_{L_r}
 \leq C_dt\max_{I,\cJ}
       c^{\kappa_{I,\cJ}}L^{\beta(\kappa_{I,\cJ}-d)}
 \leq C_d\sqrt c\,t,
\]
because \(1/2\leq\kappa_{I,\cJ}\leq d\).
Taking \(c\) sufficiently small and applying Markov's inequality
gives the claimed rate \(wL^\beta\).
If \(r<2\), the bound is trivial after decreasing \(c_d\),
as in \cref{sec:proof-completion}.
For \(Q_A(X)\), apply the moment comparison
\eqref{eq:decoupling-comparison} before Markov's inequality
and absorb its constant into \(c\); the argument \(t/K^d\)
of \(\eta_A\) remains unchanged.
\end{proof}

\subsection*{Acknowledgements}GPT-5.6 assisted in developing the proof. The authors independently verified the
argument and take full responsibility for the final proof. K.W. is supported by Hong Kong RGC grant GRF 16305526. Y.Z. was partially supported by the Simons Grant MPS-TSM-00013944 and NSF DMS-2606337. This work was carried out while Y.Z. was visiting the Simons Institute for the Theory of Computing during the Spectral Theory Beyond Graphs program in Fall 2026. 

\bibliography{Polynomial}
\bibliographystyle{abbrv}

%%%%%%%%%%%%%%%%%%%%

\end{document}